\tikzset{->-/.style={decoration={
  markings,
  mark=at position 0.5 with {\arrow{>}}},postaction={decorate}}}
\tikzset{-<-/.style={decoration={
  markings,
  mark=at position 0.5 with {\arrow{<}}},postaction={decorate}}}
\numberwithin{equation}{section}
\theoremstyle{plain}
\newtheorem{theorem}{Theorem}[section]
\newtheorem{lemma}[theorem]{Lemma}
\newtheorem{corollary}[theorem]{Corollary}
\newtheorem{proposition}[theorem]{Proposition}
\newtheorem{observation}[theorem]{Observation}
\newtheorem{cit}[theorem]{Citation}
\newtheorem*{lemma*}{Lemma}
\newtheorem*{corollary*}{Corollary}
\newtheorem*{main:thrm1}{Theorem~A}
\newtheorem*{main:thrm2}{Theorem~B}
\theoremstyle{definition}
\newtheorem{definition}[theorem]{Definition}
\newtheorem{remark}[theorem]{Remark}
\newcommand{\R}{\mathbb{R}}
\newcommand{\Z}{\mathbb{Z}}
\newcommand{\defeq}{\mathrel{\mathop{:}}=}
\newcommand{\lk}{\operatorname{lk}}
\newcommand{\st}{\operatorname{st}}
\newcommand{\Hom}{\operatorname{Hom}}
\newcommand{\Aut}{\operatorname{Aut}}
\newcommand{\Out}{\operatorname{Out}}
\newcommand{\CAT}{\operatorname{CAT}}
\newcommand{\cpx}{\Sigma K}
\newcommand{\forests}{\mathcal{F}}
\newcommand{\aforests}{\mathcal{F}^\uparrow}
\newcommand{\ie}{\mathcal{I}}
\newcommand{\aie}{\mathcal{I}^\uparrow}
\newcommand{\asta}{\st^\uparrow}
\newcommand{\alk}{\lk^\uparrow}
\newcommand{\dlk}{\lk_d}
\newcommand{\ulk}{\lk_u}
\newcommand{\adlk}{\lk_d^\uparrow}
\newcommand{\aulk}{\lk_u^\uparrow}
\DeclareMathOperator{\GL}{GL}
\DeclareMathOperator{\F}{F}
\DeclareMathOperator{\adj}{adj}
\DeclareMathOperator{\proj}{proj}
\DeclareMathOperator{\group}{\Sigma Aut}
\DeclareMathOperator{\pgroup}{P\!\group}
\numberwithin{equation}{section}
\begin{document}

\title[Symmetric automorphisms, BNSR-invariants, and finiteness properties]{Symmetric automorphisms of free groups, BNSR-invariants, and finiteness properties}

\date{\today}
\subjclass[2010]{Primary 20F65;   % geometric gp thry;
                Secondary 20F28, %aut groups
                57M07} % top methods in gp thry

\keywords{Symmetric automorphism, BNSR-invariant, finiteness properties}

\author{Matthew C.~B.~Zaremsky}
\address{Department of Mathematical Sciences, Binghamton University, Binghamton, NY 13902}
\email{zaremsky@math.binghamton.edu}

\begin{abstract}
 The BNSR-invariants of a group $G$ are a sequence $\Sigma^1(G)\supseteq \Sigma^2(G) \supseteq \cdots$ of geometric invariants that reveal important information about finiteness properties of certain subgroups of $G$. We consider the symmetric automorphism group $\group_n$ and pure symmetric automorphism group $\pgroup_n$ of the free group $F_n$, and inspect their BNSR-invariants. We prove that for $n\ge 2$, all the ``positive'' and ``negative'' character classes of $\pgroup_n$ lie in $\Sigma^{n-2}(\pgroup_n)\setminus \Sigma^{n-1}(\pgroup_n)$. We use this to prove that for $n\ge 2$, $\Sigma^{n-2}(\group_n)$ equals the full character sphere $S^0$ of $\group_n$ but $\Sigma^{n-1}(\group_n)$ is empty, so in particular the commutator subgroup $\group_n'$ is of type $\F_{n-2}$ but not $\F_{n-1}$. Our techniques involve applying Morse theory to the complex of symmetric marked cactus graphs.
\end{abstract}

\maketitle
\thispagestyle{empty}

%----------------------------------------------------------------

\section*{Introduction}

The Bieri--Neumann--Strebel--Renz (BNSR) invariants $\Sigma^m(G)$ of a group $G$ are a sequence of geometric invariants $\Sigma^1(G)\supseteq \Sigma^2(G)\supseteq \cdots$ that encode a large amount of information about the subgroups of $G$ containing the commutator subgroup $G'$. For example if $G$ is of type $\F_n$ and $m\le n$ then $\Sigma^m(G)$ reveals precisely which such subgroups are of type $\F_m$. Recall that a group is of \emph{type $F_n$} if it admits a classifying space with compact $n$-skeleton; these \emph{finiteness properties} are an important class of quasi-isometry invariants of groups. The BNSR-invariants are in general very difficult to compute; a complete description is known for the class of right-angled Artin groups \cite{meier98,bux99}, but not many other substantial families of groups. A complete picture also exists for the generalized Thompson groups $F_{n,\infty}$ \cite{bieri10,kochloukova12,zaremsky15}, and the first invariant $\Sigma^1$ is also known for some additional classes of groups, e.g., one-relator groups \cite{brown87bns}, pure braid groups \cite{koban15} and pure symmetric automorphism groups of right-angled Artin groups \cite{orlandi-korner00,koban14}, among others.

In this paper, we focus on the groups $\group_n$ and $\pgroup_n$ of symmetric and pure symmetric automorphisms of the free group $F_n$. An automorphism of $F_n$ is \emph{symmetric} if it takes each basis element to a conjugate of a basis element, and \emph{pure symmetric} if it takes each basis element to a conjugate of itself. These are also known as the \emph{(pure) loop braid groups}, and are the groups of motions of $n$ unknotted unlinked oriented loops in $3$-space; an element describes these loops moving around and through each other, ending up back where they started, either individually in the pure case or just as a set in the non-pure case (but preserving orientation in both cases). Other names for these and closely related groups include welded braid groups, permutation-conjugacy automorphism groups, braid-permutation groups and more. See \cite{damiani16} for a discussion of the many guises of these groups. Some topological properties known for $\pgroup_n$ include that is has cohomological dimension $n-1$ \cite{collins89}, it is a duality group \cite{brady01} and its cohomology ring has been computed \cite{jensen06}.

The first invariant $\Sigma^1(\pgroup_n)$ was fully computed by Orlandi--Korner \cite{orlandi-korner00} (she denotes the group by $P\Sigma_n$). Koban and Piggott subsequently computed $\Sigma^1(G)$ for $G$ the group of pure symmetric automorphisms of any right-angled Artin group \cite{koban14}. One reason that the question of BNSR-invariants is interesting for $\pgroup_n$ is that $\pgroup_n$ is similar to a right-angled Artin group, for instance it admits a presentation in which the relations are all commutators (see Section~\ref{sec:gps}), but for $n\ge 3$ it is not a right-angled Artin group \cite{koban14}, and it is not known whether it is a $\CAT(0)$ group \cite[Question~6.4]{brady01}. The BNSR-invariants are completely known for right-angled Artin groups, but the Morse theoretic proof of this fact in \cite{bux99} made essential use of the $\CAT(0)$ geometry of the relevant complexes.

Our approach here is to use Morse theory applied to the complex of \emph{symmetric marked cactus graphs} $\cpx_n$ to prove the following main results:

\begin{main:thrm1}
 For $n\ge 2$, if $\chi$ is a positive or negative\footnote{For the definitions of ``positive'' and ``negative'' consult Definition~\ref{def:pos_supp}.} character of $\pgroup_n$ then $[\chi]\in\Sigma^{n-2}(\pgroup_n)\setminus\Sigma^{n-1}(\pgroup_n)$.
\end{main:thrm1}

\begin{main:thrm2}
 For $n\ge 2$, we have $\Sigma^{n-2}(\group_n) = S(\group_n) = S^0$ and $\Sigma^{n-1}(\group_n) = \emptyset$. In particular the commutator subgroup $\group_n'$ is of type $\F_{n-2}$ but not $\F_{n-1}$.
\end{main:thrm2}

For example this shows that $\group_n'$ is finitely generated if and only if $n\ge 3$, and finitely presentable if and only if $n\ge 4$. It appears that these are already new results (except for the fact that $\group_2'$ is not finitely generated, which is easy to see since $\group_2 \cong F_2\rtimes S_2$). Theorem~B also provides what could be viewed as the first examples for $m\ge 2$ of ``naturally occurring'' groups $G$ of type $\F_\infty$ such that $\Sigma^{m-1}(G)=S(G)$ but $\Sigma^m(G)=\emptyset$, and of groups of type $\F_\infty$ whose commutator subgroups have arbitrary finiteness properties. (One can also construct more \emph{ad hoc} examples: we have noticed that taking a semidirect product of $F_2^n$ with the Coxeter group of type $B_n=C_n$ also produces a group with these properties.) As a remark, contrasting the loop braid group $\group_n$ with the classical braid group $B_n$, it is easy to see that $\Sigma^m(B_n)=S(B_n)=S^0$ for all $m$ and $n$, and $B_n'$ is of type $\F_\infty$ for all $n$.

For the case $n=3$ we can actually get a full computation of $\Sigma^m(\pgroup_3)$; Orlandi--Korner already computed $\Sigma^1(\pgroup_3)$ (it is dense in $S(\pgroup_3)$; see Citation~\ref{cit:sig1}), and we prove that $\Sigma^2(\pgroup_3)=\emptyset$ (see Theorem~\ref{thrm:n3}). We tentatively conjecture that $\Sigma^{n-2}(\pgroup_n)$ is always dense in $S(\pgroup_n)$ and $\Sigma^{n-1}(\pgroup_n)$ is always empty, but for $n\ge 4$ it seems this cannot be proved using our techniques, as discussed in Remark~\ref{rmk:n>3}.

As a remark, there is a result of Pettet involving finiteness properties of some other normal subgroups of $\pgroup_n$. Namely she found that the kernel of the natural projection $\pgroup_n \to \pgroup_{n-1}$ is finitely generated but not finitely presentable when $n\ge 3$ \cite{pettet10}. This is in contrast to the pure braid situation, where the kernel of the ``forget a strand'' map $PB_n \to PB_{n-1}$ is of type $\F_\infty$ (in fact it is the free group $F_{n-1}$).

This paper is organized as follows. In Section~\ref{sec:invs_morse} we recall the background on BNSR-invariants and Morse theory. In Section~\ref{sec:gps} we discuss the groups of interest, and in Section~\ref{sec:cpx} we discuss the complex $\cpx_n$. We prove Theorem~A in Section~\ref{sec:asc_link}, and with Theorem~A in hand we quickly prove Theorem~B in Section~\ref{sec:symm_auts}.

\subsection*{Acknowledgments} I am grateful to Alex Schaefer for a helpful conversation about graph theory that in particular helped me figure out how to prove that $\Sigma^2(\pgroup_3)=\emptyset$ (see Subsection~\ref{sec:n3}), to Celeste Damiani for pointing me toward the paper \cite{savushkina96}, and to Robert Bieri for enlightening discussions about the novelty of the behavior of the BNSR-invariants found in Theorem~B.

%----------------------------------------------------------------

\section{BNSR-invariants and Morse theory}\label{sec:invs_morse}

In this rather technical section we recall the definition of the BNSR-invariants, and set up the Morse theoretic approach that we will use. The results in Subsection~\ref{sec:bnsr_morse} are general enough that we expect they should be useful in the future to compute BNSR-invariants of other interesting groups.

\subsection{BNSR-invariants}\label{sec:invs}

A CW-complex $Z$ is called a \emph{classifying space} for $G$, or $K(G,1)$, if $\pi_1(Z)\cong G$ and $\pi_k(Z)=0$ for all $k\ne 1$. We say that $G$ is of \emph{type $F_n$} if it admits a $K(G,1)$ with compact $n$-skeleton. For example $G$ is of type $\F_1$ if and only if it is finitely generated and of type $\F_2$ if and only if it is finitely presentable. If $G$ is of type $\F_n$ for all $n$ we say it is of \emph{type $F_\infty$}. If $G$ acts properly and cocompactly on an $(n-1)$-connected CW-complex, then $G$ is of type $\F_n$.

\begin{definition}[BNSR-invariants]\label{def:sig_invs}
 Let $G$ be a group acting properly and cocompactly on an $(n-1)$-connected CW-complex $Y$ (so $G$ is of type $\F_n$). Let $\chi\colon G\to\R$ be a \emph{character} of $G$, i.e., a homomorphism to $\R$. There exists a map $h_\chi \colon Y \to \R$, which we will call a \emph{character height function}, such that $h_\chi(g.y)=\chi(g)+h_\chi(y)$ for all $g\in G$ and $y\in Y$. For $t\in\R$ let $Y_{\chi\ge t}$ be the full subcomplex of $Y$ supported on those $0$-cells $y$ with $h_\chi(y)\ge t$. Let $[\chi]$ be the equivalence class of $\chi$ under scaling by positive real numbers. The \emph{character sphere} $S(G)$ is the set of non-trivial character classes $[\chi]$. For $m\le n$, the $m$th \emph{BNSR-invariant} $\Sigma^m(G)$ is defined to be
 $$\Sigma^m(G)\defeq \{[\chi]\in S(G)\mid (Y_{\chi\ge t})_{t\in\R} \text{ is essentially $(m-1)$-connected}\}\text{.}$$
\end{definition}

Recall that $(Y_{\chi\ge t})_{t\in\R}$ is said to be \emph{essentially $(m-1)$-connected} if for all $t\in\R$ there exists $-\infty<s\le t$ such that the inclusion of $Y_{\chi\ge t}$ into $Y_{\chi\ge s}$ induces the trivial map in $\pi_k$ for all $k\le m-1$.

It turns out $\Sigma^m(G)$ is well defined up to the choice of $Y$ and $h_\chi$ (see for example \cite[Definition~8.1]{bux04}). As a remark, the definition there used the filtration by sets $h_\chi^{-1}([t,\infty))_{t\in\R}$, but thanks to cocompactness this filtration is essentially $(m-1)$-connected if and only if our filtration $(Y_{\chi\ge t})_{t\in\R}$ is.

One important application of BNSR-invariants is the following:

\begin{cit}\cite[Theorem~B and Remark~6.5]{bieri88}\label{cit:sig_fin}
 Let $G$ be a group of type $\F_m$. Let $G'\le H\le G$. Then $H$ is of type $\F_m$ if and only if for every non-trivial character $\chi$ of $G$ such that $\chi(H)=0$, we have $[\chi]\in\Sigma^m(G)$.
\end{cit}

For example, if $H=\ker(\chi)$ for $\chi$ a \emph{discrete} character of $G$, i.e., one with image $\Z$, then $H$ is of type $\F_m$ if and only if $[\pm\chi]\in\Sigma^m(G)$. Also note that $G'$ itself is of type $\F_m$ if and only if $\Sigma^m(G)=S(G)$.

Other important classical properties of the $\Sigma^m(G)$ are that they are all open subsets of $S(G)$ and that they are invariant under the natural action of $\Aut(G)$ on $S(G)$ \cite{bieri87,bieri88}.

\subsection{Morse theory}\label{sec:morse}

Bestvina--Brady Morse theory can be a useful tool for computing BNSR-invariants. In this section we give the relevant definitions and results from Morse theory, in the current level of generality needed.

Let $Y$ be an affine cell complex (see \cite[Definition~2.1]{bestvina97}). The \emph{star} $\st_Y v$ of a $0$-cell $v$ in $Y$ is the subcomplex of $Y$ consisting of cells that are faces of cells containing $v$. The \emph{link} $\lk_Y v$ of $v$ is the simplicial complex $\st_Y v$ of directions out of $v$ into $\st_Y v$. We will suppress the subscript $Y$ from the notation when it is clear from context. If $v$ and $w$ are distinct $0$-cells sharing a $1$-cell we will call $v$ and $w$ \emph{adjacent} and write $v \adj w$.

In \cite{bestvina97}, Bestvina and Brady defined a \emph{Morse function} on an affine cell complex $Y$ to be a map $Y\to \R$ that is affine on cells, takes discretely many values on the $0$-cells, and is non-constant on $1$-cells. When using Morse theory to compute BNSR-invariants though, these last two conditions are often too restrictive. The definition of \emph{Morse function} that will prove useful for our purposes is as follows.

\begin{definition}[Morse function]\label{def:ht_fxn}
 Let $Y$ be an affine cell complex and let $h\colon Y \to \R$ and $f\colon Y \to \R$ be functions that are affine on cells. We call $(h,f)\colon Y \to \R\times \R$ a \emph{Morse function} if the set $\{h(v)-h(w)\mid v,w\in Y^{(0)}\text{, } v\adj w\}$ does not have $0$ as a limit point (we will call it \emph{discrete near $0$}), the set $\{f(v)\mid v\in Y^{(0)}\}$ is finite\footnote{In what follows it will be clear that ``finite'' could be replaced with ``well ordered'' but for our present purposes we will just assume it is finite.}, and if $v,w\in Y^{(0)}$ with $v\adj w$ and $h(v)=h(w)$ then $f(v)\ne f(w)$.
\end{definition}

For example if $h$ takes discrete values on $0$-cells and distinct values on adjacent $0$-cells, then (taking $f$ to be constant and ignoring it) we recover Bestvina and Brady's notion of ``Morse function''.

Using the usual order on $\R$ and the lexicographic order on $\R\times \R$, it makes sense to compare $(h,f)$ values of $0$-cells. On a given cell $c$, since $h$ and $f$ are affine on $c$ it is clear that $(h,f)$ achieves its maximum and minimum values at unique faces of $c$, and the last assumption in Definition~\ref{def:ht_fxn} ensures these will be $0$-cells.

\begin{definition}[Ascending star/link]\label{def:asc_lk}
 Given a Morse function $(h,f)$ on an affine cell complex $Y$, define the \emph{ascending star} $\asta v$ of a $0$-cell $v$ in $Y$ to be the subcomplex of $\st v$ consisting of all faces of those cells $c$ for which the unique $0$-face of $c$ where $(h,f)$ achieves its minimum is $v$. Define the \emph{ascending link} $\alk v$ to be the subcomplex of $\lk v$ consisting of directions into $\asta v$. Note that $\alk v$ is a full subcomplex of $\lk v$, since $h$ and $f$ are affine on cells.
\end{definition}

For $Y$ an affine cell complex, $(h,f)$ a Morse function on $Y$ and $t\in\R$, denote by $Y_{h\ge t}$ the subcomplex of $Y$ supported on those $0$-cells $v$ with $h(v)\ge t$.

\begin{lemma}[Morse Lemma]\label{lem:morse}
 Let $Y$ be an affine cell complex and $(h,f) \colon Y \to \R\times\R$ a Morse function. Let $t\in\R$ and $s\in [-\infty,t)$. If for all $0$-cells $v$ with $h(v)\in [s,t)$ the ascending link $\alk v$ is $(m-1)$-connected, then the inclusion $Y_{h\ge t} \to Y_{h\ge s}$ induces an isomorphism in $\pi_k$ for $k\le m-1$ and an epimorphism in $\pi_m$.
\end{lemma}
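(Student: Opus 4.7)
The approach is the standard Bestvina--Brady Morse-theoretic one, adapted to this generalized notion of Morse function. The plan is to build $Y_{h\ge s}$ from $Y_{h\ge t}$ by successively attaching the ascending stars $\asta v$ of the $0$-cells $v$ with $h(v)\in[s,t)$, in an order that respects decreasing $(h,f)$-value, and then to show that each such attachment preserves $\pi_k$ for $k\le m-1$ and is surjective on $\pi_m$.

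First I would observe that every cell $c$ of $Y_{h\ge s}$ not contained in $Y_{h\ge t}$ has a unique $0$-face $v_c$ at which $(h,f)$ is minimized on $c$, and that $c$ lies in $\asta{v_c}$. Thus $Y_{h\ge s}$ is obtained from $Y_{h\ge t}$ by adjoining the ascending stars of the $0$-cells $v$ with $h(v)\in[s,t)$. The finiteness of $f(Y^{(0)})$ together with the ``discrete near $0$'' hypothesis on $\{h(v)-h(w)\mid v\adj w\}$ should allow me to linearly order these ascending-star attachments into an exhausting filtration
\[
 Y_{h\ge t}=Y_0\subseteq Y_1\subseteq Y_2\subseteq\cdots\subseteq Y_{h\ge s}
\]
in such a way that $Y_{i+1}$ is obtained from $Y_i$ by gluing $\asta{v_i}$ along $\alk{v_i}\subseteq Y_i$.

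Granting such a filtration, each inclusion $Y_i\hookrightarrow Y_{i+1}$ is the pushout of $\alk{v_i}\hookrightarrow\asta{v_i}$ along $\alk{v_i}\hookrightarrow Y_i$. Since $\asta{v_i}$ is contractible (it straight-line deformation retracts to $v_i$ using the affine structure on cells) and $\alk{v_i}$ is $(m-1)$-connected by hypothesis, a standard van Kampen / cellular-approximation argument shows that $Y_i\hookrightarrow Y_{i+1}$ is a $\pi_k$-isomorphism for $k\le m-1$ and a $\pi_m$-epimorphism. Compactness of $S^k$ and $D^k$ for $k\le m$ then transfers the same conclusion to the inclusion $Y_{h\ge t}\hookrightarrow Y_{h\ge s}$, since any continuous image of such a compact set meets only finitely many of the newly attached ascending stars.

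The main obstacle I anticipate is producing the linear ordering of ascending-star attachments: the $h$-values on $0$-cells in $[s,t)$ can in principle be dense in $\R$, and the hypothesis only forces $h$-differences of adjacent pairs to be discrete near $0$. My expectation is that this is handled by first stratifying by the finitely many $f$-layers and then exploiting the fact that any two $0$-faces of a single cell of $Y$ are connected by a chain of $1$-faces of that cell, so that the adjacent-pair discreteness bounds the $(h,f)$-values appearing on $0$-faces of any fixed cell away from the value at $v_c$, permitting a ``largest first'' construction of the global ordering. Once the ordering exists, the rest follows along the lines of the Morse Lemma of \cite{bestvina97}.
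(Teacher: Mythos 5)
Your overall architecture --- decompose $Y_{h\ge s}\setminus Y_{h\ge t}$ into ascending stars of the $0$-cells $v$ with $h(v)\in[s,t)$, glue each $\asta v$ onto what is already present along $\alk v$, and conclude by van Kampen/Mayer--Vietoris/Hurewicz together with a compactness argument for the limit --- is exactly the paper's. The one place where the proposal is not yet a proof is the step you yourself flag: the existence of the gluing order. Your proposed resolution does not close it. The observation that the $0$-faces of a \emph{single} cell form a finite set, so that their $(h,f)$-values are separated from the value at $v_c$, says nothing about how to order the possibly infinitely many $0$-cells of the whole slab, whose $h$-values may be dense in $[s,t)$; ``largest first'' has no meaning for a dense set, and stratifying by the finitely many $f$-values does not help, because two adjacent slab-vertices in the same $f$-stratum must then have $h$-values differing by at least $\varepsilon$, which is possible whenever $t-s>\varepsilon$, and the $h$-values within one stratum can still be dense. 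What is needed, and not supplied, is a well-founded order compatible with the relation ``$w\in\alk v$'' (so that when $v$ is glued, all of $\alk v$ and nothing else of $\lk v$ is present).

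Two fixes are available. The paper's: choose $\varepsilon>0$ with $|h(v)-h(w)|\notin(0,\varepsilon)$ for all $v\adj w$, and reduce by induction (and, for $s=-\infty$, by compactness of spheres) to the case $t-s\le\varepsilon$. Then any two adjacent $0$-cells in the slab have \emph{equal} $h$-value and hence distinct $f$-values, so ordering the slab by decreasing $f$ (arbitrarily within each $f$-level, whose members are pairwise non-adjacent) works: when $v$ is glued, a $w\in\lk v$ is already present if and only if $h(w)\ge t$, or $h(w)=h(v)$ and $f(w)>f(v)$, i.e., if and only if $w\in\alk v$. Alternatively, one can keep the global slab and observe that any chain $v_0\adj v_1\adj\cdots\adj v_k$ of slab-vertices with $(h,f)(v_0)<\cdots<(h,f)(v_k)$ has bounded length, since each strict increase in $h$ is by at least $\varepsilon$ while $h$ stays in $[s,t)$, and at most $|f(Y^{(0)})|-1$ consecutive steps can hold $h$ fixed; the resulting rank function gives the well-founded ``largest first'' order you want. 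Either way the gap is fixable, but it is precisely where the hypotheses on $(h,f)$ (discreteness near $0$ and finiteness of $f(Y^{(0)})$) enter, so it must be argued rather than expected.
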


\begin{proof} The essential parts of the proof are the same as in \cite{bestvina97}. Choose $\varepsilon>0$ such that for any $v \adj w$, $|h(v)-h(w)|\not\in (0,\varepsilon)$ (this is possible since the set of values $h(v)-h(w)$ for $v \adj w$ is discrete near $0$). We can assume by induction (and by compactness of spheres if $s=-\infty$) that $t-s\le \varepsilon$. In particular if adjacent $0$-cells $v$ and $w$ both lie in $Y_{h\ge s}\setminus Y_{h\ge t}$, then $h(v)=h(w)$ and $f(v)\ne f(w)$. To build up from $Y_{h\ge t}$ to $Y_{h\ge s}$, we need to glue in the $0$-cells of $Y_{h\ge s}\setminus Y_{h\ge t}$ along their relative links in some order such that upon gluing in $v$, all of $\alk v$ is already present, but nothing else in $\lk v$, so the relative link is precisely the ascending link. To do this, we put any order we like on each set $F_i \defeq \{v\in Y_{h\ge s}^{(0)}\setminus Y_{h\ge t}^{(0)}\mid f(v)=i\}$ for $i\in f(Y^{(0)})$, and then extend these to an order on $Y_{h\ge s}^{(0)}\setminus Y_{h\ge t}^{(0)}$ by declaring that everything in $F_i$ comes after everything in $F_j$ whenever $i<j$. Now when we glue in $v$, for $w\in \lk v$ we have $w\in \alk v$ if and only if either $h(w)>h(v)$, in which case $h(w)\ge t$ and $w$ is already present, or $h(w)=h(v)$ and $f(w)>f(v)$, in which case $w\in F_{f(w)}$ is also already present. Since the relevant ascending links are $(m-1)$-connected by assumption, the result follows from the Seifert--van Kampen, Mayer--Vietoris and Hurewicz Theorems.
\end{proof}

As a corollary to the proof, we have:

\begin{corollary}\label{cor:bad_spheres_stay}
 With the same setup as the Morse Lemma, if additionally for all $0$-cells $v$ with $s\le h(v)<t$ we have $\widetilde{H}_{m+1}(\alk v)=0$, then the inclusion $Y_{h\ge t} \to Y_{h\ge s}$ induces an injection in $\widetilde{H}_{m+1}$.
\end{corollary}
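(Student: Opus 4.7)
The plan is to follow the same step-by-step attaching scheme used in the proof of Lemma~\ref{lem:morse}, extracting one additional consequence from Mayer--Vietoris at each stage. Recall that in that proof, the $0$-cells of $Y_{h\ge s}^{(0)}\setminus Y_{h\ge t}^{(0)}$ are ordered so that $Y_{h\ge s}$ is built up from $Y_{h\ge t}$ by attaching these $0$-cells one at a time, each attachment gluing the ascending star $\asta v$ to the previously constructed subcomplex $A$ along the ascending link $\alk v$.

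At each such attaching step, first observe that $\asta v$ is contractible: using the affine structure, the straight-line homotopy $(p,r)\mapsto (1-r)p+ rv$ is well-defined on $\asta v$ and deformation retracts it to $v$, since every point of $\asta v$ lies in some cell $c$ with $v$ as a vertex and the segment from $p$ to $v$ stays in $c\subseteq \asta v$. The Mayer--Vietoris sequence for the subcomplex pair $(A,\asta v)$ with $A\cap \asta v = \alk v$ then gives
$$\widetilde{H}_{m+1}(\alk v)\to \widetilde{H}_{m+1}(A)\oplus \widetilde{H}_{m+1}(\asta v)\to \widetilde{H}_{m+1}(A\cup \asta v)\to \widetilde{H}_m(\alk v).$$
Because $\asta v$ is contractible and $\widetilde{H}_{m+1}(\alk v)=0$ by hypothesis, this exact sequence forces the inclusion $A\hookrightarrow A\cup \asta v$ to be injective on $\widetilde{H}_{m+1}$.

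Composing these injections across all successive attachments in the filtration from $Y_{h\ge t}$ up to $Y_{h\ge s}$ produces the desired injection $\widetilde{H}_{m+1}(Y_{h\ge t})\hookrightarrow \widetilde{H}_{m+1}(Y_{h\ge s})$. There is no serious obstacle beyond observing that at each stage the intersection of $\asta v$ with the previously built subcomplex really is $\alk v$ — but this is exactly the identification the ordering argument in the proof of the Morse Lemma was designed to produce, so the corollary is essentially a direct byproduct of that proof, with the Mayer--Vietoris step replacing the homotopy-connectivity input used to establish the Hurewicz/Seifert--van Kampen conclusions there.
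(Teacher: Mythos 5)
Your proof is correct and takes essentially the same route as the paper: the paper's own (one-line) proof likewise observes that $Y_{h\ge s}$ is obtained from $Y_{h\ge t}$ by coning off the ascending links of the intermediate $0$-cells and then cites Mayer--Vietoris, which is exactly the step you spell out in detail (contractibility of $\asta v$ and injectivity of $\widetilde{H}_{m+1}(A)\to\widetilde{H}_{m+1}(A\cup\asta v)$ when $\widetilde{H}_{m+1}(\alk v)=0$). No gaps.
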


\begin{proof}
 In the proof of the Morse Lemma, we saw that $Y_{h\ge s}$ is obtained from $Y_{h\ge t}$ by coning off the ascending links of $0$-cells $v$ with $s\le h(v)<t$, so this is immediate from the Mayer--Vietoris sequence.
\end{proof}

For example if $Y$ is $(m+1)$-dimensional, so the links are at most $m$-dimensional, then this additional condition will always be satisfied.

\medskip

Wen dealing with BNSR-invariants, the following is particularly useful:

\begin{corollary}\label{cor:morse}
 Let $Y$ be an $(m-1)$-connected affine cell complex with a Morse function $(h,f)$. Suppose there exists $q$ such that, for every $0$-cell $v$ of $Y$ with $h(v)<q$, $\alk v$ is $(m-1)$-connected. Then the filtration $(Y_{h\ge t})_{t\in\R}$ is essentially $(m-1)$-connected. Now assume additionally that $\widetilde{H}_{m+1}(Y)=0$ and for every $0$-cell $v$ of $Y$ with $h(v)<q$, $\widetilde{H}_{m+1}(\alk v)=0$, and that for all $p$ there exists a $0$-cell $v$ with $h(v)< p$ such that $\widetilde{H}_m(\alk v)\ne 0$. Then the filtration $(Y_{h\ge t})_{t\in\R}$ is not essentially $m$-connected.
\end{corollary}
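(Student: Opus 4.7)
The plan is to handle the two claims separately: Part 1 follows from the Morse Lemma plus compactness, while Part 2 uses Corollary~\ref{cor:bad_spheres_stay} together with a long exact sequence analysis. For Part 1, I first aim to show that $Y_{h\ge s}$ is itself $(m-1)$-connected for every $s\le q$. Given a $k$-sphere $\sigma$ in $Y_{h\ge s}$ with $k\le m-1$, the $(m-1)$-connectedness of $Y$ produces a nullhomotopy of $\sigma$, and by compactness this nullhomotopy lands in some $Y_{h\ge r}$ with $r\le s$. Every $0$-cell $v$ with $h(v)\in[r,s)$ satisfies $h(v)<q$, so the Morse Lemma gives $\pi_k(Y_{h\ge s})\cong\pi_k(Y_{h\ge r})$, whence $[\sigma]=0$ in $\pi_k(Y_{h\ge s})$. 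Then for any $t\in\R$, taking $s=\min(t,q)$ makes $Y_{h\ge s}$ itself $(m-1)$-connected, so the inclusion $Y_{h\ge t}\hookrightarrow Y_{h\ge s}$ is automatically trivial in $\pi_k$ for $k\le m-1$.

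For Part 2, I plan to show that with $t=q$, the inclusion $Y_{h\ge q}\hookrightarrow Y_{h\ge s}$ is nontrivial on $\pi_m$ for every $s\le q$. First Corollary~\ref{cor:bad_spheres_stay} provides injections $H_{m+1}(Y_{h\ge t})\hookrightarrow H_{m+1}(Y_{h\ge s})$ for all $s\le t\le q$; since each term of an injective directed system embeds into its colimit and the colimit here is $H_{m+1}(Y)=0$, we conclude $H_{m+1}(Y_{h\ge s})=0$ for all $s\le q$. Combining this with $\widetilde H_{m-1}(\alk v)=0$ for $v$ below $q$, the long exact sequence of the pair $(Y_{h\ge s},Y_{h\ge t})$ collapses to the short exact sequence
\[
0 \to \bigoplus_{v\,:\,s\le h(v)<t}\widetilde H_m(\alk v) \to H_m(Y_{h\ge t}) \to H_m(Y_{h\ge s}) \to 0
\]
whenever $s\le t\le q$.

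Specializing to $t=q$, let $K_s\subseteq H_m(Y_{h\ge q})$ denote the kernel of the displayed surjection (identified via the sequence with the image of the direct sum); naturality in $s$ gives $K_s\subseteq K_{s'}$ whenever $s'\le s$. The hypothesis supplies, for any $s\le q$, a $0$-cell $v^*$ with $h(v^*)<s$ and $\widetilde H_m(\alk v^*)\ne 0$; choosing $s'=h(v^*)$ places the summand indexed by $v^*$ inside $K_{s'}$ but outside $K_s$, so $K_s\subsetneq K_{s'}\subseteq H_m(Y_{h\ge q})$. Therefore $H_m(Y_{h\ge s})=H_m(Y_{h\ge q})/K_s\ne 0$, the surjection is nontrivial, and Part 1 plus the Hurewicz theorem upgrades this to nontriviality in $\pi_m$. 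The main technical obstacle is establishing the displayed SES cleanly: it requires iterating the Morse Lemma in small enough $h$-steps (using the discreteness-near-$0$ hypothesis) so that each attachment is a cone on $\alk v$ glued along $\alk v$, and verifying naturality of the resulting identifications as $s$ varies so that the nesting $K_s\subseteq K_{s'}$ is actually induced by the inclusion of pairs.
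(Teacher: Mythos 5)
Your proof is correct, and the first half is essentially the paper's argument: the paper simply applies the Morse Lemma with $s=-\infty$ (which its statement explicitly allows, the compactness-of-spheres point being absorbed into the Morse Lemma's proof) to conclude $Y_{h\ge r}$ is $(m-1)$-connected for $r\le q$, whereas you re-derive this with an explicit compactness argument; either way the conclusion for arbitrary $t$ follows by taking $s=\min\{q,t\}$. For the second half you take a genuinely reorganized, direct route where the paper argues by contradiction. The paper assumes essential $m$-connectedness, deduces that some $Y_{h\ge s}$ and all lower stages are $m$-connected, and then uses Mayer--Vietoris to see that coning off an ascending link with $\widetilde{H}_m\ne 0$ into an $m$-acyclic stage creates a nonzero class in $\widetilde{H}_{m+1}$, which Corollary~\ref{cor:bad_spheres_stay} pushes injectively all the way down to $\widetilde{H}_{m+1}(Y)=0$, a contradiction. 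You instead use Corollary~\ref{cor:bad_spheres_stay} together with $\widetilde{H}_{m+1}(Y)=0$ up front to kill $H_{m+1}$ of every stage below $q$, and then track strictly growing kernels on $H_m$ to show $H_m(Y_{h\ge s})$ never vanishes; this buys a slightly stronger, non-contrapositive conclusion (the map $H_m(Y_{h\ge q})\to H_m(Y_{h\ge s})$ is nonzero for every $s$) at the cost of more bookkeeping. The same ingredients (Morse Lemma, Corollary~\ref{cor:bad_spheres_stay}, Mayer--Vietoris, Hurewicz) appear in both.

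One caveat, which you partly flag yourself: the displayed short exact sequence is not literally correct, because for a $0$-cell $v$ with $s\le h(v)<t$ the ascending link $\alk v$ need not lie in $Y_{h\ge t}$ (it may contain $0$-cells at the same $h$-level with larger $f$-value), so the direct sum does not map to $H_m(Y_{h\ge t})$. The gluings must be performed one at a time, and at each intermediate stage $Y_i\subseteq Y_{i+1}$ Mayer--Vietoris gives an injection $\widetilde{H}_m(\alk v_i)\hookrightarrow H_m(Y_i)$ (using $H_{m+1}(Y_{i+1})=0$, which follows from Corollary~\ref{cor:bad_spheres_stay} applied to $Y_{i+1}\subseteq Y_{h\ge s}$) whose image is exactly the kernel of $H_m(Y_i)\twoheadrightarrow H_m(Y_{i+1})$. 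Your argument only needs this stage-by-stage statement --- each bad vertex strictly enlarges the kernel from $H_m(Y_{h\ge q})$ --- so the conclusion stands, but the global direct-sum identification of $K_{s}$ should be replaced by this iterated version.
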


\begin{proof}
 By the Morse Lemma, for any $r\le q$ the inclusion $Y_{h\ge r} \to Y=Y_{h\ge-\infty}$ induces an isomorphism in $\pi_k$ for $k\le m-1$. Since $Y$ is $(m-1)$-connected, so is $Y_{h\ge r}$. Now for any $t\in\R$ we just need to choose $s=\min\{q,t\}$ and we get that the inclusion $Y_{h\ge t}\to Y_{h\ge s}$ induces the trivial map in $\pi_k$ for $k\le m-1$, simply because $Y_{h\ge s}$ is $(m-1)$-connected.

 For the second claim, suppose that $(Y_{h\ge t})_{t\in\R}$ is essentially $m$-connected. Say $t< q$, and choose $s\le t$ such that the inclusion $Y_{h\ge t} \to Y_{h\ge s}$ induces the trivial map in $\pi_k$ for $k\le m$. Also, since $t< q$, this inclusion induces a surjection in these $\pi_k$ by the Morse Lemma, so in fact $Y_{h\ge s}$ itself is $m$-connected, as are all $Y_{h\ge r}$ for $r\le s$ (for the same reason). Now choose $v$ such that $h(v)< s$ and $\widetilde{H}_m(\alk v)\ne 0$. Since $\widetilde{H}_m(Y_{h\ge r})=0$ for all $r\le s$, Mayer--Vietoris and Corollary~\ref{cor:bad_spheres_stay} say that $\widetilde{H}_{m+1}(Y_{h\ge q})\ne 0$ for any $q\le h(v)$. But this includes $q=-\infty$, which contradicts our assumption that $\widetilde{H}_{m+1}(Y)=0$.
\end{proof}

\subsection{BNSR-invariants via Morse theory}\label{sec:bnsr_morse}

We now return to the situation in Definition~\ref{def:sig_invs}, so $Y$ is an $(n-1)$-connected CW-complex on which $G$ acts properly and cocompactly (and, we assume, cellularly), $\chi$ is a character of $G$, and $h_\chi$ is a character height function on $Y$. The goal of this subsection is to establish a Morse function on $Y$ using $h_\chi$.

Let us make two additional assumptions. First, assume $Y$ is simplicial (this is just to ensure that any function on $Y^{(0)}$ can be extended to a function on $Y$ that is affine on cells). Second, assume that no adjacent $0$-simplicies in $Y$ share a $G$-orbit (if this is not the case, it can be achieved by subdividing). Let $\overline{f} \colon Y^{(0)}/G \to \R$ be any function that takes distinct values on adjacent $0$-cells, where the cell structure on $Y/G$ is induced from $Y$. (Just to give some examples, one could construct $\overline{f}$ by randomly assigning distinct values to the $0$-cells in $Y/G$, or one could take the barycentric subdivision and have $\overline{f}$ read the dimension.) Define $f\colon Y^{(0)}\to \R$ via $f(v)\defeq \overline{f}(G.v)$, and extend $f$ to a map (also called $f$) on all of $Y$ by extending affinely to each simplex.

\begin{lemma}\label{lem:bnsr_morse}
 With $Y$, $h_\chi$ and $f$ as above, $(h_\chi,f)\colon Y \to \R\times \R$ is a Morse function.
\end{lemma}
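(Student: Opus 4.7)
The plan is simply to verify the three defining conditions of a Morse function from Definition~\ref{def:ht_fxn} for the pair $(h_\chi,f)$, each of which follows from the standing cocompactness hypothesis together with the setup choices imposed just before the lemma.

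First I would show that $\{h_\chi(v)-h_\chi(w)\mid v,w\in Y^{(0)},\ v\adj w\}$ is discrete near $0$. The key observation is that, by cocompactness of the $G$-action, there are only finitely many $G$-orbits of $1$-cells; pick representatives $e_1,\dots,e_k$ with endpoints $v_i\adj w_i$. For any edge $g.e_i$ with endpoints $g.v_i\adj g.w_i$, the equivariance property $h_\chi(g.y)=\chi(g)+h_\chi(y)$ makes the $\chi(g)$ term cancel, so
\[
h_\chi(g.v_i)-h_\chi(g.w_i)=h_\chi(v_i)-h_\chi(w_i).
\]
Hence the set of height differences across edges is the finite set $\{\pm(h_\chi(v_i)-h_\chi(w_i))\mid 1\le i\le k\}$, and any finite set trivially has no limit points.

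Next I would note that $Y/G$ is a finite CW-complex because $G$ acts cocompactly, so $(Y/G)^{(0)}$ is finite, and therefore $f(Y^{(0)})=\overline{f}((Y/G)^{(0)})$ is finite, giving the second condition. Finally, for the third condition, suppose $v\adj w$ in $Y$ with $h_\chi(v)=h_\chi(w)$. By the standing assumption that no two adjacent $0$-simplices of $Y$ share a $G$-orbit, the images $G.v$ and $G.w$ are distinct $0$-cells of $Y/G$, and they are joined by the image of the edge between $v$ and $w$, so they are adjacent in $Y/G$. By construction $\overline{f}$ takes distinct values on adjacent $0$-cells, so $f(v)=\overline{f}(G.v)\ne \overline{f}(G.w)=f(w)$, as required. (Note that the hypothesis $h_\chi(v)=h_\chi(w)$ is not even needed here; the assumption is built into $\overline{f}$ itself.)

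There is no serious obstacle in this proof: everything reduces to unpacking the cocompactness assumption, the equivariance of $h_\chi$, and the definition of $\overline{f}$. The only mild subtlety is remembering why the subdivision hypothesis (that adjacent $0$-simplices lie in distinct $G$-orbits) is required, namely to guarantee that the edge between $v$ and $w$ descends to a genuine edge in $Y/G$ whose endpoints are distinct $0$-cells, so that the distinct-values-on-adjacent-$0$-cells property of $\overline{f}$ can be invoked.
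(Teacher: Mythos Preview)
Your proof is correct and follows essentially the same route as the paper's: both verify the three Morse-function conditions by using equivariance of $h_\chi$ to reduce edge-height differences to a finite set indexed by $G$-orbits of edges, finiteness of $(Y/G)^{(0)}$ for the second condition, and the no-shared-orbit hypothesis together with the choice of $\overline{f}$ for the third. The only item you omit is the remark that $h_\chi$ and $f$ are affine on cells, but as in the paper this is immediate from the construction.
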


\begin{proof}
 The functions $h_\chi$ and $f$ are affine on cells by construction. The set $\{f(v)\mid v\in Y^{(0)}\}$ equals the set $\{\overline{f}(G.v)\mid G.v\in Y^{(0)}/G\}$, which is finite since $Y/G$ is compact. For any $g\in G$ we have $h_\chi(g.v)-h_\chi(g.w) = \chi(g)+h_\chi(v)-\chi(g)-h_\chi(w) = h_\chi(v)-h_\chi(w)$, so by compactness of $Y/G$ the set $\{h_\chi(v)-h_\chi(w)\mid v,w\in Y^{(0)}\text{, } v\adj w\}$ is finite (and hence discrete near $0$). Finally, since $\overline{f}$ takes distinct values on adjacent $0$-cells in $Y/G$, and no adjacent $0$-cells in $Y$ share an orbit, we see $f$ takes distinct values on adjacent $0$-cells in $Y$.
\end{proof}

In particular Corollary~\ref{cor:morse} can now potentially be used to prove that $(Y_{\chi\ge t})_{t\in\R}$ is or is not essentially $(m-1)$-connected, and hence that $[\chi]$ is or is not in $\Sigma^m(G)$.

While any $f$ constructed as above will make $(h_\chi,f)$ a Morse function, this does not mean every $f$ may be useful, for instance if the ascending links are not as highly connected as one would hope. In fact it seems likely that situations exist where every choice of $f$ yields a ``useless'' Morse function. Hence, in practice one hopes to have a concrete space $Y$ with a natural choice of $f$ that produces nice ascending links.

%----------------------------------------------------------------

\section{(Pure) symmetric automorphism groups}\label{sec:gps}

We now turn to our groups of interest.

Let $F_n$ be the free group with basis $S\defeq \{x_1,\dots,x_n\}$. An automorphism $\alpha\in\Aut(F_n)$ is called \emph{symmetric} if for each $i\in [n]\defeq \{1,\dots,n\}$, $(x_i)\alpha$ is conjugate to $x_j$ for some $j$; if each $(x_i)\alpha$ is conjugate to $x_i$ we call $\alpha$ \emph{pure symmetric}\footnote{Automorphisms will be acting on the right here, so we will reflect this in the notation.}. Note that in some texts, ``symmetric'' allows for $(x_i)\alpha$ to be conjugate to some $x_j^{-1}$, but we do not allow that here. Denote by $\group_n$ the group of all symmetric automorphisms of $F_n$, and by $\pgroup_n$ the group of pure symmetric automorphisms. The abelianization $F_n \to \Z^n$ induces a surjection $\Aut(F_n)\to \GL_n(\Z)$, and the restriction of this map to $\group_n$ yields a splitting
$$\group_n \cong \pgroup_n \rtimes S_n \text{.}$$

An equivalent description of $\group_n$ (and $\pgroup_n$) is as the \emph{(pure) loop braid group}, i.e., the group of (pure) motions of $n$ unknotted, unlinked oriented circles in $3$-space. The subgroup of $\group_n$ consisting of those automorphisms taking $x_1\cdots x_n$ to itself is isomorphic to the classical braid group $B_n$, and the intersection of this with $\pgroup_n$ is the classical pure braid group $PB_n$ \cite{savushkina96}. Other names for $\group_n$ and closely related groups include welded braid groups, permutation-conjugacy automorphism groups, braid-permutation groups and more. Details on the various viewpoints for these groups can be found for example in \cite{damiani16}.

In \cite{mccool86}, McCool found a (finite) presentation for $\pgroup_n$. The generators are the automorphisms $\alpha_{i,j}$ ($i\ne j$) given by $(x_i)\alpha_{i,j} = x_j^{-1} x_i x_j$ and $(x_k)\alpha_{i,j} = x_k$ for $k\ne i$, and the defining relations are $[\alpha_{i,j},\alpha_{k,\ell}]=1$, $[\alpha_{i,j},\alpha_{k,j}]=1$ and $[\alpha_{i,j}\alpha_{k,j},\alpha_{i,k}]=1$, for distinct $i,j,k,\ell$. (In particular note that this implies $\pgroup_2\cong F_2$.) It will also be convenient later to consider automorphisms $\alpha_{I,j}$, defined via
$$\alpha_{I,j} \defeq \prod_{i\in I}\alpha_{i,j}$$
for $I\subseteq [n]\setminus \{j\}$, where the product can be taken in any order thanks to the relation $[\alpha_{i,j},\alpha_{k,j}]=1$. Following Collins \cite{collins89} we call these \emph{symmetric Whitehead automorphisms}.

Since the defining relations in McCool's presentation are commutators, we immediately see that $\pgroup_n$ has abelianization $\Z^{n(n-1)}$, with basis $\{\overline{\alpha}_{i,j}\mid i\ne j\}$. Since $S_n$ acts transitively on the $\alpha_{i,j}$, we also quickly compute that $\group_n$ abelianizes to $\Z\times (\Z/2\Z)$ for all $n\ge 2$. A natural basis for the vector space $\Hom(\pgroup_n,\R) \cong \R^{n(n-1)}$ is the dual of $\{\overline{\alpha}_{i,j}\mid i\ne j\}$. This dual basis has a nice description that we will now work up to. For $\alpha\in\pgroup_n$ let $w_{i,\alpha} \in F_n$ be the elements such that $(x_i)\alpha=x_i^{w_{i,\alpha}}$. For each $i\ne j$ define $\chi_{i,j}\colon \pgroup_n \to \Z$ by sending $\alpha$ to $\varphi_j(w_{i,\alpha})$, where $\varphi_j \colon F_n \to \Z$ are the projections sending $x_j$ to $1$ and the other generators to $0$.

\begin{lemma}\label{lem:char_homs}
 Each $\chi_{i,j}$ is a homomorphism.
\end{lemma}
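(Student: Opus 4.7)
The plan is to compute $w_{i,\alpha\beta}$ directly in terms of $w_{i,\alpha}$ and $w_{i,\beta}$, then apply $\varphi_j$ and exploit the pure symmetric structure to split the result as a sum.

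First I would address well-definedness: $w_{i,\alpha}$ is only determined up to left-multiplication by a power of $x_i$ (since $x_i$ is fixed by conjugation by $x_i$). But for $j\ne i$ the homomorphism $\varphi_j$ vanishes on $x_i$, so $\chi_{i,j}(\alpha)=\varphi_j(w_{i,\alpha})$ does not depend on the choice of representative.

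Next I would compute the conjugator for a product. Given $\alpha,\beta\in\pgroup_n$, using that automorphisms act on the right,
\[
(x_i)(\alpha\beta)=\bigl((x_i)\alpha\bigr)\beta=\bigl(w_{i,\alpha}^{-1}x_i w_{i,\alpha}\bigr)\beta=\bigl((w_{i,\alpha})\beta\bigr)^{-1}\bigl((x_i)\beta\bigr)\bigl((w_{i,\alpha})\beta\bigr),
\]
and since $(x_i)\beta=w_{i,\beta}^{-1}x_i w_{i,\beta}$, this rewrites as conjugation of $x_i$ by $w_{i,\beta}\cdot\bigl((w_{i,\alpha})\beta\bigr)$. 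Hence we may take
\[
w_{i,\alpha\beta}=w_{i,\beta}\cdot\bigl((w_{i,\alpha})\beta\bigr).
\]
Applying $\varphi_j$ gives $\chi_{i,j}(\alpha\beta)=\varphi_j(w_{i,\beta})+\varphi_j\bigl((w_{i,\alpha})\beta\bigr)=\chi_{i,j}(\beta)+\varphi_j\bigl((w_{i,\alpha})\beta\bigr)$.

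The key step (and the only place the pure symmetric hypothesis is used in an essential way) is to show that $\varphi_j\bigl((w)\beta\bigr)=\varphi_j(w)$ for every $w\in F_n$ and every $\beta\in\pgroup_n$. I would prove this by writing $w$ as a word in the $x_k^{\pm1}$ and noting that, because $\beta$ is pure symmetric, each letter $x_k$ is sent to a conjugate $w_{k,\beta}^{-1}x_k w_{k,\beta}$; applying the homomorphism $\varphi_j$ to this conjugate yields $\varphi_j(x_k)=\delta_{kj}$, so $\varphi_j\circ\beta$ and $\varphi_j$ agree on each generator, hence on all of $F_n$. Equivalently, $\varphi_j$ factors through abelianization and $\beta$ induces the identity on $F_n^{\mathrm{ab}}$. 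Thus $\varphi_j\bigl((w_{i,\alpha})\beta\bigr)=\varphi_j(w_{i,\alpha})=\chi_{i,j}(\alpha)$, and combining this with the previous line gives $\chi_{i,j}(\alpha\beta)=\chi_{i,j}(\alpha)+\chi_{i,j}(\beta)$.

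There is no real obstacle here; the only subtle points are the non-uniqueness of $w_{i,\alpha}$ (handled by $j\ne i$) and the appearance of the ``twisted'' term $(w_{i,\alpha})\beta$ in the product formula, which is neutralized exactly by the observation that pure symmetric automorphisms act trivially on the abelianization of $F_n$.
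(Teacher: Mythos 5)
Your proof is correct and follows essentially the same route as the paper: both derive the cocycle-type formula $w_{i,\alpha\beta}=w_{i,\beta}\cdot\bigl((w_{i,\alpha})\beta\bigr)$ and then use that pure symmetric automorphisms act trivially on the abelianization, so $\varphi_j\circ\beta=\varphi_j$. Your additional remark on well-definedness of $w_{i,\alpha}$ (up to powers of $x_i$, harmless since $j\ne i$) is a correct point the paper leaves implicit.
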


\begin{proof}
 Let $\alpha,\beta\in \pgroup_n$ and $i\in [n]$. Write $w_{i,\alpha}=x_{k_1}^{\varepsilon_1}\cdots x_{k_r}^{\varepsilon_r}$ for $k_1,\dots,k_r\in [n]$ and $\varepsilon_1,\dots,\varepsilon_r\in \{\pm1\}$, so we have
 $$(x_i)\alpha \circ \beta = (x_{k_r}^{-\varepsilon_r})\beta \cdots (x_{k_1}^{-\varepsilon_1})\beta w_{i,\beta}^{-1} x_i w_{i,\beta} (x_{k_1}^{\varepsilon_1})\beta \cdots (x_{k_r}^{\varepsilon_r})\beta \text{.}$$
 In particular
 $$w_{i,\alpha\circ\beta} = w_{i,\beta} (x_{k_1}^{\varepsilon_1})\beta \cdots (x_{k_r}^{\varepsilon_r})\beta \text{.}$$
 Note that $\varphi_j((x_{k_1}^{\varepsilon_1})\beta \cdots (x_{k_r}^{\varepsilon_r})\beta) = \varphi_j(x_{k_1}^{\varepsilon_1} \cdots x_{k_r}^{\varepsilon_r})$, so $\varphi_j(w_{i,\alpha\circ\beta}) = \varphi_j(w_{i,\alpha}) + \varphi_j(w_{i,\beta})$, as desired.
\end{proof}

Clearly $\chi_{i,j}(\alpha_{k,\ell}) = \delta_{(i,j),(k,\ell)}$ (the Kronecker delta), so $\{\chi_{i,j}\mid i\ne j\}$ is the basis of $\Hom(\pgroup_n,\R)$ dual to $\{\overline{\alpha}_{i,j}\mid i\ne j\}$. Since $\Hom(\pgroup_n,\R)\cong \R^{n(n-1)}$ we know that the character sphere $S(\pgroup_n)$ is $S(\pgroup_n)=S^{n(n-1)-1}$.

For the group $\group_n$, $\Hom(\group_n,\R)\cong \R$ for all $n$, so to find a basis we just need a non-trivial character. We know $\group_n = \pgroup_n \rtimes S_n$, so the most natural candidate is the character reading $1$ on each $\alpha_{i,j}$ and $0$ on $S_n$. Note that $S(\group_n)=S^0$ for all $n\ge 2$.

Writing an arbitrary character of $\pgroup_n$ as $\chi=\sum_{i\ne j} a_{i,j}\chi_{i,j}$, we recall Orlandi--Korner's computation of $\Sigma^1(\pgroup_n)$:

\begin{cit}\cite{orlandi-korner00}\label{cit:sig1}
 We have $[\chi]\in\Sigma^1(\pgroup_n)$ unless either
 \begin{enumerate}
  \item there exist distinct $i$ and $j$ such that $a_{p,q}=0$ whenever $\{p,q\} \not\subseteq \{i,j\}$, or
  \item there exist distinct $i$, $j$ and $k$ such that $a_{p,q}=0$ whenever $\{p,q\} \not\subseteq \{i,j,k\}$ and moreover $a_{p,q}=-a_{p',q}$ whenever $\{p,p',q\}=\{i,j,k\}$.
 \end{enumerate}
 In these cases $[\chi]\not\in\Sigma^1(\pgroup_n)$.
\end{cit}

For example, $\Sigma^1(\pgroup_2)$ is empty (which we know anyway since $\pgroup_2\cong F_2$) and $\Sigma^1(\pgroup_3)$ is a $5$-sphere with three $1$-spheres and one $2$-sphere removed, so in particular $\Sigma^1(\pgroup_3)$ is dense in $S(\pgroup_3)$.

\medskip

The groups $\group_n$ and $\pgroup_n$ are of type $\F_\infty$ (this can be seen for example after work of Collins \cite{collins89})\footnote{Actually, $\pgroup_n$ is even of ``type $\F$'', meaning it has a compact classifying space, but we will not need this fact.}, so one can ask what $\Sigma^m(\group_n)$ and $\Sigma^m(\pgroup_n)$ are, for any $m$ and $n$. One thing we know, which we will use later, is that the invariants are all closed under taking antipodes:

\begin{observation}\label{obs:antipodes}
 If $[\chi]\in\Sigma^m(G)$ for $G=\group_n$ or $\pgroup_n$ then $[-\chi]\in\Sigma^m(G)$.
\end{observation}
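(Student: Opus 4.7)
The plan is to exhibit an automorphism $\Phi$ of $G$ (where $G=\pgroup_n$ or $\group_n$) that acts as $-1$ on the abelianization, and then invoke the $\Aut(G)$-invariance of the BNSR-invariants recalled in Subsection~\ref{sec:invs}: since this invariance implies $[\chi]\in\Sigma^m(G)\iff [\chi\circ\Phi]\in\Sigma^m(G)$, having $\chi\circ\Phi=-\chi$ for every character $\chi$ yields the observation at once.

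The natural candidate is conjugation by the involution $\iota \in \Aut(F_n)$ defined by $(x_i)\iota \defeq x_i^{-1}$. Although $\iota$ itself is not symmetric, I expect it normalizes both $\pgroup_n$ and $\group_n$ inside $\Aut(F_n)$. Indeed, for $\alpha \in \pgroup_n$ with $(x_i)\alpha = w_{i,\alpha}^{-1} x_i w_{i,\alpha}$, a short computation using that $\iota$ is an involutive group homomorphism gives
$$(x_i)(\iota\alpha\iota) = ((w_{i,\alpha})\iota)^{-1}\, x_i\, (w_{i,\alpha})\iota\text{,}$$
so $\iota\alpha\iota\in\pgroup_n$. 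Replacing $x_i$ on the right-hand side by $x_{\sigma(i)}$ gives the analogous statement for elements of $\group_n$. Hence conjugation by $\iota$ defines an automorphism $\Phi$ of $G$ in both cases.

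It remains to compute the effect of $\Phi$ on characters. Using the description $\chi_{i,j}(\alpha)=\varphi_j(w_{i,\alpha})$ from Lemma~\ref{lem:char_homs}, together with the obvious identity $\varphi_j\circ\iota = -\varphi_j$ on $F_n$ (inverting every generator reverses every exponent sum), the formula above gives $\chi_{i,j}\circ\Phi = -\chi_{i,j}$. Since the $\chi_{i,j}$ form a basis of $\Hom(\pgroup_n,\R)$ this yields $\chi\circ\Phi=-\chi$ for every character of $\pgroup_n$; the statement for $\group_n$ follows by restriction, since $\Hom(\group_n,\R)$ is one-dimensional and already detected on a single $\alpha_{i,j}$. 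I do not foresee any real obstacle here; the only step requiring actual verification is the normalization claim for $\iota$, which is a short symbolic computation.
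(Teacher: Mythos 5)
Your proposal is correct and is essentially the paper's own argument: the paper also uses the automorphism of $F_n$ inverting each $x_i$, notes that conjugation by it induces an automorphism of $\group_n$ and $\pgroup_n$ sending each $\chi$ to $-\chi$, and concludes by $\Aut(G)$-invariance of $\Sigma^m(G)$. Your computation of $w_{i,\iota\alpha\iota}=(w_{i,\alpha})\iota$ and the resulting identity $\chi_{i,j}\circ\Phi=-\chi_{i,j}$ correctly fills in the details the paper leaves implicit.
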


\begin{proof}
 The automorphism of $F_n$ taking each $x_i$ to $x_i^{-1}$ induces an automorphism of $\group_n$ and $\pgroup_n$ under which each character $\chi$ maps to $-\chi$. The result now follows since $\Sigma^m(G)$ is invariant under $\Aut(G)$.
\end{proof}

We can now state our main results.

\begin{definition}[Positive/negative character]\label{def:pos_supp}
 Call $\chi=\sum_{i\ne j} a_{i,j} \chi_{i,j}$ \emph{positive} if $a_{i,j}>0$ for all $i,j$, and \emph{negative} if $a_{i,j}<0$ for all $i,j$.
\end{definition}

\begin{main:thrm1}\label{thrm:main1}
 For $n\ge 2$, if $\chi$ is a positive or negative character of $\pgroup_n$ then $[\chi]\in\Sigma^{n-2}(\pgroup_n)\setminus\Sigma^{n-1}(\pgroup_n)$.
\end{main:thrm1}

As a remark, thanks to Observation~\ref{obs:antipodes} the negative character classes lie in a given $\Sigma^m(\pgroup_n)$ if and only if the positive ones do, so we only need to prove Theorem~A for positive characters.

\begin{main:thrm2}\label{thrm:main2}
 For $n\ge 2$, we have $\Sigma^{n-2}(\group_n) = S(\group_n) = S^0$ and $\Sigma^{n-1}(\group_n) = \emptyset$. In particular the commutator subgroup $\group_n'$ is of type $\F_{n-2}$ but not $\F_{n-1}$.
\end{main:thrm2}

The commutator subgroups $\group_n'$ and $\pgroup_n'$ are easy to describe; see for example Lemmas~4 and~5 of \cite{savushkina96}. The commutator subgroup $\pgroup_n'$ consists of those automorphisms taking each $x_i$ to $w_i^{-1} x_i w_i$ for some $w_i\in F_n'$. In other words, $\pgroup_n'$ is just the intersection of all the $\ker(\chi_{i,j})$. Note that for $n\ge 2$ the commutator subgroup $\pgroup_n'$ is not finitely generated, since it surjects onto $\pgroup_2'\cong F_2'$. The commutator subgroup $\group_n'$ consists of those automorphisms taking $x_i$ to $w_i^{-1} x_{\pi(i)} w_i$ for some even permutation $\pi\in S_n$ (i.e., $\pi\in A_n$) and satisfying $\varphi(w_1\cdots w_n)=0$ where $\varphi \colon F_n \to \Z$ is the map taking each basis element to $1$. As a remark, the abelianization map $\group_n \to \Z$ splits, for instance by sending $\Z$ to $\langle \alpha_{1,2}\rangle$, so we have $\group_n = \group_n' \rtimes \Z$.

In Section~\ref{sec:symm_auts} we will be able to deduce Theorem~B from Theorem~A quickly by using the next lemma. If we write $BB_n$ for the kernel of the character $\sum_{i\ne j}\chi_{i,j}$ of $\pgroup_n$ taking each $\alpha_{i,j}$ to $1$, so $BB_n$ is the ``Bestvina--Brady-esque'' subgroup of $\pgroup_n$, then we have:

\begin{lemma}\label{lem:bb_decomp}
 $\group_n'=BB_n \rtimes A_n$.
\end{lemma}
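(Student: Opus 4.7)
The plan is to identify $BB_n \rtimes A_n$ as the kernel of a surjective homomorphism $\Phi \colon \group_n \to \Z \times \Z/2\Z$, and then invoke the fact recalled just above that $\group_n^{\mathrm{ab}} \cong \Z \times \Z/2\Z$. Since this target is Hopfian (finitely generated abelian), any such surjection must agree with the abelianization map up to an automorphism of the target, and in particular its kernel must be $\group_n'$.

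First I would check that $BB_n \rtimes A_n$ is a well-defined subgroup of $\group_n = \pgroup_n \rtimes S_n$. The conjugation action of $\sigma \in S_n$ on $\pgroup_n$ sends $\alpha_{i,j}$ to $\alpha_{\sigma(i),\sigma(j)}$ and dually permutes the basis characters $\{\chi_{i,j}\}$. Hence the symmetric combination $\sum_{i\ne j}\chi_{i,j}$ is $S_n$-invariant, its kernel $BB_n$ is $S_n$-stable, and so the semidirect product $BB_n \rtimes A_n$ sits naturally inside $\group_n$. Next I would set $\Phi(\beta\pi) \defeq \bigl(\sum_{i\ne j}\chi_{i,j}(\beta),\,\mathrm{sgn}(\pi)\bigr)$ for $\beta\in\pgroup_n$, $\pi\in S_n$, and verify it is a homomorphism via the semidirect product law $(\beta_1\pi_1)(\beta_2\pi_2)=(\beta_1\cdot{}^{\pi_1}\!\beta_2)(\pi_1\pi_2)$; the $S_n$-invariance just established makes the first coordinate additive, and standard properties of $\mathrm{sgn}$ handle the second. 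By construction $\ker\Phi = BB_n \rtimes A_n$.

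Finally, $\Phi$ is surjective because $\Phi(\alpha_{1,2}) = (1,0)$ and any transposition maps to $(0,1)$, so the Hopfian argument above gives $\ker\Phi = \group_n'$, as desired. The only place that genuinely requires care is the $S_n$-invariance of $\sum_{i\ne j}\chi_{i,j}$, but this follows at once from the transitivity of the $S_n$-action on the dual basis $\{\chi_{i,j}\}$; there is no real obstacle to overcome.
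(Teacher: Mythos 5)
Your proof is correct, but it takes a genuinely different route from the paper's. The paper quotes the explicit description of $\group_n'$ from Savushkina (recalled just before the lemma): $\group_n'$ consists of the automorphisms $x_i\mapsto w_i^{-1}x_{\pi(i)}w_i$ with $\pi\in A_n$ and $\varphi(w_1\cdots w_n)=0$. Given that, the proof is a one-liner: restrict the projection $\group_n\to S_n$ to $\group_n'$, observe the image is $A_n$ and the kernel is $\pgroup_n\cap\group_n'$, which the description identifies with $\ker\bigl(\sum_{i\ne j}\chi_{i,j}\bigr)=BB_n$. You instead build the explicit homomorphism $\Phi(\beta\pi)=\bigl(\sum_{i\ne j}\chi_{i,j}(\beta),\operatorname{sgn}(\pi)\bigr)$, whose kernel is visibly $BB_n\rtimes A_n$, and then use surjectivity together with the Hopficity of $\Z\times\Z/2\Z\cong\group_n^{\mathrm{ab}}$ to conclude the kernel is $\group_n'$. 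All the steps check out: the $S_n$-invariance of $\sum_{i\ne j}\chi_{i,j}$ does make $BB_n$ normal in $\group_n$ and makes $\Phi$ a homomorphism, and the Hopfian factorization argument is sound. What your approach buys is independence from the Savushkina citation --- you only need the abelianization $\group_n^{\mathrm{ab}}\cong\Z\times\Z/2\Z$, which the paper establishes separately from McCool's presentation. What it costs is that the paper's route also delivers, essentially for free, the concrete description of the elements of $\group_n'$ (which the paper wants anyway), whereas yours identifies the subgroup without describing its elements.
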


\begin{proof}
 When we restrict the map $\group_n \to S_n$ to $\group_n'$, by the above description we know that the image is $A_n$. This map splits, and the kernel of this restricted map is the kernel of the original map, which is $\pgroup_n$, intersected with $\group_n'$. The above description tells us that this consists of all pure symmetric automorphisms $\alpha$ such that $\varphi(w_{1,\alpha}\cdots w_{n,\alpha})=0$, and from the definition of the $\chi_{i,j}$ it is clear that $\varphi(w_{1,\alpha}\cdots w_{n,\alpha})=\sum_{i\ne j} \chi_{i,j}(\alpha)$, so we are done.
\end{proof}

In particular $BB_n$ has finite index in $\group_n'$, so they have the same finiteness properties.

To prove Theorems~A and~B we need a complex on which the groups act nicely, and to understand ascending links. We discuss the complex in Section~\ref{sec:cpx} and the ascending links in Section~\ref{sec:asc_link}.

%----------------------------------------------------------------

\section{The complex of symmetric marked cactus graphs}\label{sec:cpx}

In \cite{collins89}, Collins found a contractible simplicial complex $\cpx_n$ on which the ``Outer'' versions of $\group_n$ and $\pgroup_n$ act properly and cocompactly, described by \emph{symmetric marked cactus graphs}. We will use the obvious analog of this complex for our groups. Thanks to the action being proper and cocompact, and the complex being contractible, it can be used to ``reveal'' the BNSR-invariants of the groups, as per Definition~\ref{def:sig_invs}. In this section we recall the construction of $\cpx_n$ and set up the character height functions that will then be used in the following sections to prove our main results.

\textbf{Terminology:} By a \emph{graph} we will always mean a connected finite directed graph with one vertex specified as the \emph{basepoint}, such that the basepoint has degree at least two and all other vertices have degree at least three. We will use the usual terminology of initial and terminal endpoints of an edge, paths, cycles, reduced paths, simple cycles, subtrees, subforests and spanning trees.

Our graphs will always be understood to have rank $n$, unless otherwise specified.

Let $R_n$ be the $n$-petaled rose, that is the graph with one vertex $*$ (which is necessarily the basepoint) and $n$ edges. Then $\pi_1(R_n)\cong F_n$, and we identify $\Aut(F_n)$ with the group of basepoint-preserving self-homotopy equivalences of $R_n$, modulo homotopy.

\begin{definition}[Cactus graph, cladode, base, above, projection, before/after, between]\label{def:cactus}
 A graph $\Gamma$ is called a \emph{cactus graph} if every edge is contained in precisely one simple cycle. We will refer to the simple cycles, viewed as subgraphs, as \emph{cladodes}. For example the petals of the rose $R_n$ are precisely its cladodes. We will assume the orientations of the edges are such that each cladode is a directed cycle, that is, no distinct edges of a cladode share an origin (or terminus). If $C$ is a cladode of a cactus graph $\Gamma$ with basepoint $p$, there is a unique vertex $b_C$ of $C$ closest to $p$, which we call the \emph{base} of $C$. Note that every vertex is the base of at least one cladode. We say that a cladode $C$ is \emph{above} a cladode $D$ if every path from $b_C$ to $p$ must pass through an edge of $D$. If $C$ is above $D$ there is a unique vertex $\proj_D(C)$ of $D$ closest to $b_C$, which we will call the \emph{projection} of $C$ onto $D$. Given two distinct points $x$ and $y$ in a common cladode $C$, with $x,y\ne b_C$, there is a unique reduced path from $x$ to $y$ in $C\setminus\{b_C\}$; if this path follows the orientation of $C$ we say $x$ is \emph{before} $y$, and otherwise we say $x$ is \emph{after} $y$. Within $C$ it also makes sense to say that an edge is before or after another edge, or that an edge is before or after a point not in the interior of that edge. We say a point or edge is \emph{between} two points or edges if it is before one and after the other.
\end{definition}

See Figure~\ref{fig:cactus} for an example illustrating the many definitions in Definition~\ref{def:cactus}.

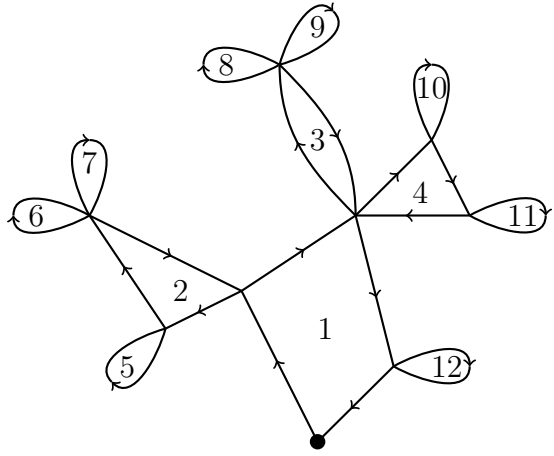
\begin{figure}[htb]
 \begin{tikzpicture}[line width=0.8pt]\centering
  \draw[-<-] (0,0) -- (1,1);\draw[-<-] (1,1) -- (0.5,3);\draw[-<-] (0.5,3) -- (-1,2);\draw[-<-] (-1,2) -- (0,0);
	\draw[-<-] (-1,2) -- (-3,3);\draw[-<-] (-3,3) -- (-2,1.5);\draw[-<-] (-2,1.5) -- (-1,2);
	\draw[-<-] (0.5,3) to [out=90, in=-45] (-0.5,5);\draw[-<-] (-0.5,5) to [out=-90, in=135] (0.5,3);
	\draw[-<-] (0.5,3) -- (2,3);\draw[-<-] (2,3) -- (1.5,4);\draw[-<-] (1.5,4) -- (0.5,3);
	\draw (1,1) to [out=-30, in=-90] (2,1);\draw[<-] (2,1) to [out=90, in=30] (1,1);
	\draw (2,3) to [out=-30, in=-90] (3,3);\draw[<-] (3,3) to [out=90, in=30] (2,3);
	\draw (1.5,4) to [out=60, in=0] (1.5,5);\draw[<-] (1.5,5) to [out=180, in=120] (1.5,4);
	\draw (-3,3) to [out=60, in=0] (-3,4);\draw[<-] (-3,4) to [out=180, in=120] (-3,3);
	\draw (-3,3) to [out=150, in=90] (-4,3);\draw[<-] (-4,3) to [out=-90, in=-150] (-3,3);
	\draw (-2,1.5) to [out=195, in=135] (-2-.707,1.5-.707);\draw[<-] (-2-.707,1.5-.707) to [out=-45, in=-105] (-2,1.5);
	\draw (-0.5,5) to [out=15, in=-45] (-0.5+.707,5+.707);\draw[<-] (-0.5+.707,5+.707) to [out=135, in=75] (-0.5,5);
	\draw (-0.5,5) to [out=150, in=90] (-1.5,5);\draw[<-] (-1.5,5) to [out=-90, in=-150] (-0.5,5);
	\filldraw (0,0) circle (2.5pt);
	\node at (0.1,1.5) {$1$}; \node at (-1.8,2) {$2$}; \node at (0,4) {$3$}; \node at (1.35,3.3) {$4$}; \node at (-2-.5,1.5-.5) {$5$}; \node at (-3-.7,3) {$6$}; \node at (-3,3+.7) {$7$}; \node at (-0.5-.7,5) {$8$}; \node at (-0.5+.5,5+.5) {$9$}; \node at (1.5,4+.7) {$10$}; \node at (2+.7,3) {$11$}; \node at (1+.7,1) {$12$};
 \end{tikzpicture}
 \caption{A cactus graph, with its cladodes numbered for reference. To illustrate the definitions in Definition~\ref{def:cactus} with some examples, we note: cladodes $3$ and $4$ have the same base; cladode $7$ is above cladodes $2$ and $1$ but no others; the projection of cladode $8$ onto cladode $1$ is the vertex that is the base of cladode $3$; and the base of cladode $3$ is after the base of cladode $2$ and before the base of cladode $12$, and hence is between them.}\label{fig:cactus}
\end{figure}

Given a graph $\Gamma$ and a subforest $F$, we will write $\Gamma/F$ for the graph obtained by quotienting each connected component of $F$ to a point. The quotient map $d\colon \Gamma \to \Gamma/F$ is called a \emph{forest collapse} or \emph{forest blow-down}. It is a homotopy equivalence, and a homotopy inverse of a forest blow-down is called a \emph{forest blow-up}, denoted $u\colon \Gamma/F \to \Gamma$.

\begin{definition}[Marking]\label{def:marking}
 A \emph{marking} of a basepointed graph $\Gamma$ is a homotopy equivalence $\rho \colon R_n \to \Gamma$ from the $n$-petaled rose to $\Gamma$, taking basepoint to basepoint.
\end{definition}

A marking of the rose itself represents an automorphism of $F_n$, thanks to our identification of $\Aut(F_n)$ with the group of basepoint-preserving self-homotopy equivalences of $R_n$, modulo homotopy. If a marking $\alpha \colon R_n \to R_n$ even represents a (pure) symmetric automorphism, then it makes sense to call the marking itself \emph{(pure) symmetric}. More generally:

\begin{definition}[(Pure) symmetric marking]\label{def:symm_marking}
 A marking $\rho \colon R_n \to \Gamma$ is called \emph{(pure) symmetric} if there exists a forest collapse $d\colon \Gamma \to R_n$ such that $d\circ \rho \colon R_n \to R_n$ is (pure) symmetric.
\end{definition}

\begin{definition}[Symmetric marked cactus graph]
 A \emph{symmetric marked cactus graph} is a triple $(\Gamma,p,\rho)$ where $\Gamma$ is a cactus graph with basepoint $p$ and $\rho$ is a symmetric marking. Two such triples $(\Gamma,p,\rho)$, $(\Gamma',p',\rho')$ are considered equivalent if there is a homeomorphism $\phi\colon \Gamma \to \Gamma'$ taking $p$ to $p'$ such that $\phi\circ \rho \simeq \rho'$. We will denote equivalence classes by $[\Gamma,p,\rho]$, and will usually just refer to $[\Gamma,p,\rho]$ as a symmetric marked cactus graph.
\end{definition}

We note that under this equivalence relation, every symmetric marked cactus graph is equivalent to one where the marking is even pure symmetric. This is just because the markings of the rose that permute the petals are all equivalent to the trivial marking. Moreover, these are the \emph{only} markings equivalent to the trivial marking, so the map $\alpha \mapsto [R_n,*,\alpha]$ is in fact a bijection between $\pgroup_n$ and the set of symmetric marked roses.

\begin{definition}[Partial order]
 We define a partial order $\le$ on the set of symmetric marked cactus graphs as follows. Let $[\Gamma,p,\rho]$ be a symmetric marked cactus graph and $F$ a subforest of $\Gamma$, with $d\colon \Gamma \to \Gamma/F$ the forest collapse. Let $\overline{p}_F \defeq d(p)$ and let $\overline{\rho}_F \defeq d \circ \rho$. We declare that $[\Gamma,p,\rho] \le [\Gamma/F,\overline{p}_F,\overline{\rho}_F]$. It is easy to check that the relation $\le$ is well defined up to equivalence of triples, and that it is a partial order.
\end{definition}

\begin{definition}[Complex of symmetric marked cactus graphs]
 The \emph{complex of symmetric marked cactus graphs} $\cpx_n$ is the geometric realization of the partially ordered set of symmetric marked cactus graphs.
\end{definition}

Note that $\group_n$ and $\pgroup_n$ act (on the right) on $\cpx_n^{(0)}$ via $[\Gamma,p,\rho].\alpha \defeq [\Gamma,p,\rho \circ \alpha]$, and this extends to an action on $\cpx_n$ since for any forest collapse $d\colon \Gamma \to \Gamma/F$, we have $(d\circ \rho) \circ \alpha = d\circ (\rho \circ \alpha)$, i.e., $\overline{\rho}_F \circ \alpha = \overline{(\rho \circ \alpha)}_F$.

\begin{cit}\cite[Proposition~3.5, Theorem~4.7]{collins89}
 The complex $\cpx_n$ is contractible and $(n-1)$-dimensional, and the actions of $\group_n$ and $\pgroup_n$ on $\cpx_n$ are proper and cocompact.
\end{cit}

Technically Collins considers the ``Outer'' version where we do not keep track of basepoints, but it is straightforward to get these results also in our basepointed ``Auter'' version.

In particular, we have the requisite setup of Definition~\ref{def:sig_invs}.

\begin{remark}
 One can similarly consider the complex of \emph{all} marked basepointed graphs, and get the well studied spine of \emph{Auter space}, which is contractible and on which $\Aut(F_n)$ acts properly and cocompactly (see \cite{culler86,hatcher98}). This is not relevant for our present purposes though, since the abelianization of $\Aut(F_n)$ is finite, and hence its character sphere is empty.
\end{remark}

\medskip

The next step is to take a character $\chi$ of $\pgroup_n$ and induce a character height function $h_\chi$ on $\cpx_n$. First recall that we equivocate between symmetric markings of roses and elements of $\pgroup_n$. Hence for $0$-simplices in $\cpx_n$ of the form $[R_n,*,\alpha]$, we can just define $h_\chi([R_n,*,\alpha])\defeq \chi(\alpha)$. In general we define $h_\chi([\Gamma,p,\rho])$ as follows:

\begin{definition}[The character height function $h_\chi$]
 Let $[\Gamma,p,\rho]$ be a symmetric marked cactus graph. Define $h_\chi([\Gamma,p,\rho])$ to be
 $$h_\chi([\Gamma,p,\rho]) \defeq \max\{\chi(\alpha)\mid [\Gamma,p,\rho]\in \st([R_n,*,\alpha])\}\text{.}$$
 Extend this affinely to the simplices of $\cpx_n$, to get $h_\chi \colon \cpx_n \to \R$.
\end{definition}

\begin{observation}
 $h_\chi$ is a character height function.
\end{observation}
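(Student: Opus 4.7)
My plan is to verify the two defining requirements of a character height function: (1) $h_\chi$ is well-defined, and (2) the equivariance identity $h_\chi(y.\alpha) = \chi(\alpha) + h_\chi(y)$ holds for all $y \in \cpx_n$ and $\alpha \in \pgroup_n$.

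The first step is to check the max in the definition is over a finite, non-empty set. A vertex $[R_n,*,\alpha]$ lies in $\st([\Gamma,p,\rho])$ precisely when the two are comparable in the poset, and since $R_n$ is minimal this means $[\Gamma,p,\rho] \ge [R_n,*,\alpha]$, i.e., there exists a subforest $F\subseteq\Gamma$ together with an equivalence $(\Gamma/F,\overline{p}_F,\overline{\rho}_F) \sim (R_n,*,\alpha)$. Since $\Gamma$ has only finitely many subforests, and the equivalence class of $(\Gamma/F,\overline{p}_F,\overline{\rho}_F)$ determines $\alpha$ uniquely (because the map $\alpha \mapsto [R_n,*,\alpha]$ is a bijection between $\pgroup_n$ and symmetric marked roses), the max is taken over a finite set. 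Non-emptiness follows from the observation that from any cactus graph of rank $n$ one can produce a forest collapse to $R_n$ by collapsing, in each cladode, all but one edge.

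The second step is to verify equivariance on $0$-simplices. The key calculation is that for any forest $F\subseteq \Gamma$ with collapse $d\colon \Gamma \to \Gamma/F$, we have
\[
\overline{\rho\circ\alpha}_F \;=\; d\circ(\rho\circ\alpha) \;=\; (d\circ\rho)\circ\alpha \;=\; \overline{\rho}_F\circ\alpha.
\]
Consequently, $[\Gamma,p,\rho\circ\alpha] \in \st([R_n,*,\beta])$ if and only if $[\Gamma,p,\rho] \in \st([R_n,*,\beta\circ\alpha^{-1}])$, i.e., the correspondence $\beta \leftrightarrow \beta\circ\alpha^{-1}$ is a bijection between the indexing sets of the two maxima. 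Reindexing via $\gamma = \beta\circ\alpha^{-1}$ and using that $\chi$ is a homomorphism gives
\[
h_\chi([\Gamma,p,\rho\circ\alpha]) \;=\; \max\{\chi(\gamma)+\chi(\alpha) \mid [\Gamma,p,\rho]\in \st([R_n,*,\gamma])\} \;=\; \chi(\alpha)+h_\chi([\Gamma,p,\rho]).
\]

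The third and final step extends equivariance from vertices to all of $\cpx_n$. The action of $\pgroup_n$ on $\cpx_n$ is simplicial, since post-composition by $\alpha$ preserves the partial order on markings, so it permutes the simplices of $\cpx_n$. By construction $h_\chi$ is affine on simplices, and vertex-equivariance together with a constant shift by $\chi(\alpha)$ respects affine combinations, so the identity $h_\chi(y.\alpha) = \chi(\alpha) + h_\chi(y)$ propagates to all of $\cpx_n$. There is no real obstacle here; the only point requiring care is the bookkeeping in the bijection of the indexing sets, which is a straightforward consequence of associativity of the composition $d\circ\rho\circ\alpha$.
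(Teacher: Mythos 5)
Your proof is correct and takes essentially the same route as the paper: the identity $\overline{(\rho\circ\alpha)}_F = \overline{\rho}_F\circ\alpha$ yields a $\chi(\alpha)$-shifting bijection between the indexing sets of the two maxima, and the rest is reindexing plus affine extension to simplices. Your additional well-definedness checks are fine (one immaterial quibble: with the paper's convention $[\Gamma,p,\rho]\le[\Gamma/F,\overline{p}_F,\overline{\rho}_F]$, the roses are \emph{maximal} in the partial order, not minimal).
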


\begin{proof}
 We need to show that $h_\chi([\Gamma,p,\rho].\alpha) = h_\chi([\Gamma,p,\rho]) + \chi(\alpha)$ for all $[\Gamma,p,\rho]\in \cpx_n^{(0)}$ and $\alpha\in \pgroup_n$. We know that $[\Gamma,p,\rho].\alpha = [\Gamma,p,\rho\circ \alpha]$, and clearly $[\Gamma,p,\rho]\in \st [R_n,*,\beta]$ if and only if $[\Gamma,p,\rho\circ \alpha]\in \st [R_n,*,\beta\circ \alpha]$, so this follows simply because $\chi(\beta\circ \alpha)=\chi(\beta)+\chi(\alpha)$ for all $\alpha,\beta\in \pgroup_n$.
\end{proof}

Now we need a ``tiebreaker'' function $f$ as in Lemma~\ref{lem:bnsr_morse}. As discussed before and after that lemma, any randomly chosen injective $\overline{f}\colon \cpx_n^{(0)}/G \to \R$ could serve to induce a tiebreaker $f\colon \cpx_n \to \R$, but we want to be more clever than this. In particular our tiebreaker will yield tractable ascending links that will actually reveal parts of the BNSR-invariants.

The $0$-cells in the orbit space $\cpx_n/\pgroup_n$ are homeomorphism classes of cactus graphs, so ``number of vertices'' is a well defined measurement on these $0$-cells. Let $\overline{f}\colon \cpx_n^{(0)}/G \to \R$ be the function taking a graph to the \emph{negative} of its number of vertices. In particular since we are using the negative, the rose has the largest $\overline{f}$ value of all cactus graphs. Let $f\colon \cpx_n \to \R$ be the extension of $\overline{f}$ described before Lemma~\ref{lem:bnsr_morse}, so $f([\Gamma,p,\rho])$ equals the negative number of vertices of $\Gamma$, and consider the function $(h_\chi,f)\colon \cpx_n \to \R\times \R$. Since $\cpx_n$ is simplicial and adjacent $0$-simplices in $\cpx_n$ cannot share a $\pgroup_n$-orbit (for instance since they necessarily have different $f$ values), the following is immediate from Lemma~\ref{lem:bnsr_morse}:

\begin{corollary}
 For any $\chi$, $(h_\chi,f)$ is a Morse function on $\cpx_n$. \qed
\end{corollary}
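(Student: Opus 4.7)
The plan is to invoke Lemma~\ref{lem:bnsr_morse} directly, reducing the proof to verifying its three hypotheses for $Y=\cpx_n$ with the $\pgroup_n$-action and the function $\overline{f}$ just defined. These hypotheses are: (i) $\cpx_n$ is simplicial, (ii) no two adjacent $0$-simplices of $\cpx_n$ share a $\pgroup_n$-orbit, and (iii) $\overline{f}$ takes distinct values on adjacent $0$-cells of the quotient $\cpx_n^{(0)}/\pgroup_n$. Item (i) is built into the definition of $\cpx_n$ as the geometric realization of a partially ordered set.

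For (iii), I would note that two $0$-cells of $\cpx_n$ are adjacent precisely when one is strictly below the other in the partial order on symmetric marked cactus graphs, equivalently when one is obtained from the other by a non-trivial forest collapse $d\colon \Gamma\to \Gamma/F$. Since $F$ has at least one edge, such a collapse strictly reduces the number of vertices. Hence two adjacent $0$-simplices of $\cpx_n$ have underlying cactus graphs with different vertex counts, and since $\overline{f}$ reads the negative of the vertex count of the underlying graph, adjacent $0$-cells of $\cpx_n^{(0)}/\pgroup_n$ receive distinct $\overline{f}$-values.

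For (ii), the $\pgroup_n$-action $[\Gamma,p,\rho].\alpha \defeq [\Gamma,p,\rho\circ \alpha]$ preserves the underlying basepointed cactus graph $(\Gamma,p)$; only the marking $\rho$ is altered. So any two $0$-cells in a common $\pgroup_n$-orbit share the same underlying graph and in particular the same vertex count, hence the same $f$-value. Combining with (iii), adjacent $0$-cells of $\cpx_n$ cannot lie in the same $\pgroup_n$-orbit.

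I foresee no real obstacle: all three conditions are essentially definitional. Once they are in place, cocompactness of the $\pgroup_n$-action on $\cpx_n$ (from Collins) supplies, as in the proof of Lemma~\ref{lem:bnsr_morse}, the remaining finite-image condition on $f$ and the discrete-near-$0$ condition on the set $\{h_\chi(v)-h_\chi(w)\mid v\adj w\}$, the latter because $\chi$-equivariance of $h_\chi$ makes the set of $h_\chi$-differences over adjacent pairs constant on $\pgroup_n$-orbits of such pairs, of which there are only finitely many. Lemma~\ref{lem:bnsr_morse} then yields at once that $(h_\chi,f)$ is a Morse function on $\cpx_n$.
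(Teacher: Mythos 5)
Your proposal is correct and follows exactly the paper's route: the paper likewise observes that $\cpx_n$ is simplicial and that adjacent $0$-simplices cannot share a $\pgroup_n$-orbit (precisely because a non-trivial forest collapse changes the vertex count, hence the $f$-value), and then declares the corollary immediate from Lemma~\ref{lem:bnsr_morse}. Your verification of the three hypotheses, including the appeal to cocompactness for the finiteness and discreteness conditions, is exactly what that lemma's proof supplies.
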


It is clear from the definition of $h_\chi$ that $\cpx_n^{h_\chi\ge t}$ is the union of the stars of those $[R_n,*,\alpha]$ with $\chi(\alpha)\ge t$. It is a common phenomenon when working in Auter space and its relatives to encounter important subcomplexes that are unions of stars of marked roses. Another example arises in \cite{bestvina07}, where Bestvina, Bux and Margalit use ``homology markings'' of roses to prove that for $n\ge 3$ the kernel of $\Out(F_n)\to \GL_n(\Z)$ has cohomological dimension $2n-4$ and is not of type $\F_{2n-4}$ (when $n\ge 4$ it remains open whether or not this kernel is of type $\F_{2n-5}$).

\medskip

We record here a useful technical lemma that gives information on how $h_\chi$ can differ between ``nearby'' symmetric marked roses. Let $[\Gamma,p,\rho]$ be a symmetric marked cactus graph and let $T$ be a spanning tree in $\Gamma$. Since $T$ is spanning, collapsing $T$ yields a symmetric marked rose. The marking $\rho$ provides the cladodes of $\Gamma$ with a numbering from $1$ to $n$; let $C_{i,\rho}$ be the $i$th cladode. Since $T$ is a spanning tree, it meets $C_{i,\rho}$ at all but one edge; write $E_{i,T}$ for the single-edge subforest of $C_{i,\rho}$ that is not in $T$. In particular, intuitively, upon collapsing $T$, $E_{i,T}$ becomes the $i$th petal of $R_n$. Note that $T$ is completely determined by the set $\{E_{1,T},\dots,E_{n,T}\}$, namely it consists of all the edges of $\Gamma$ not in any $E_{i,T}$.

\begin{lemma}[Change of spanning tree]\label{lem:change_tree}
 Let $[\Gamma,p,\rho]$ be a symmetric marked cactus graph and let $T$ be a spanning tree in $\Gamma$. Suppose $U$ is another spanning tree such that $E_{j,T}\ne E_{j,U}$ but $E_{i,T}=E_{i,U}$ for all $i\ne j$ (so $U$ differs from $T$ only in the $j$th cladode). Suppose that $E_{j,T}$ is before $E_{j,U}$ (in the language of Definition~\ref{def:cactus}). Let $\emptyset\ne I\subsetneq [n]$ be the set of indices $i$ such that the projection $\proj_{C_{j,\rho}}(C_{i,\rho})$ lies between $E_{j,T}$ and $E_{j,U}$ (so in particular $j\not\in I$). Then for any $\chi=\sum_{i\ne j} a_{i,j} \chi_{i,j}$ we have $h_\chi([\Gamma/T,\overline{p}_T,\overline{\rho}_T]) - h_\chi([\Gamma/U,\overline{p}_U,\overline{\rho}_U]) = \sum_{i\in I}a_{i,j}$.
\end{lemma}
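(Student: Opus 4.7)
My plan is to first observe that both $[\Gamma/T,\overline{p}_T,\overline{\rho}_T]$ and $[\Gamma/U,\overline{p}_U,\overline{\rho}_U]$ are roses, and that by inspection of the definition of $h_\chi$, the star of a rose $[R_n,*,\alpha]$ in the poset of symmetric marked cactus graphs contains (besides itself) only non-rose vertices below it, so $h_\chi([R_n,*,\alpha])=\chi(\alpha)$. Thus the quantity in question is $\chi(\alpha_T)-\chi(\alpha_U)=\chi(\alpha_T\alpha_U^{-1})$, where $\alpha_T\defeq d_T\circ \rho$ and $\alpha_U\defeq d_U\circ \rho$. Since $\chi(\alpha_T\alpha_U^{-1})$ is independent of $\rho$, I may choose $\rho$ to be a homotopy inverse $u_U\colon R_n\to\Gamma$ of $d_U$, which makes $\alpha_U=\id$ and reduces the computation to evaluating $\chi$ on the pure symmetric automorphism $d_T\circ u_U\in\pgroup_n$.

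I would then fix the natural choice of $u_U$ that sends the $i$th petal of $R_n$ to the concrete loop in $\Gamma$ obtained by traversing the unique reduced path in $U$ from $p$ to the initial endpoint of $E_{i,U}$, then crossing $E_{i,U}$, then following the unique reduced path in $U$ from its terminal endpoint back to $p$. Reading this loop through $d_T$ gives a word in the basis of $\pi_1(\Gamma/T)$ given by $d_T(E_{1,T}),\dots,d_T(E_{n,T})$. Since $T$ and $U$ agree outside the $j$th cladode, the only edge in $U\setminus T$ is $E_{j,T}$, so all contributions to the word from the $U$-segments of the loop come from traversals of $E_{j,T}$, each contributing a letter $x_j^{\pm1}$.

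The combinatorial core of the argument is then: for $i\ne j$, the $U$-path from $p$ to $b_{C_{i,\rho}}$ crosses $E_{j,T}$ precisely when $C_{i,\rho}$ lies above $C_{j,\rho}$ and $\proj_{C_{j,\rho}}(C_{i,\rho})$ lies between $E_{j,T}$ and $E_{j,U}$, i.e.\ exactly when $i\in I$. This is because $U\cap C_{j,\rho}$ is the arc of $C_{j,\rho}$ obtained by removing $E_{j,U}$, and the unique path in this arc from $b_{j,\rho}$ to $\proj_{C_{j,\rho}}(C_{i,\rho})$ uses the edge $E_{j,T}$ exactly when the projection lies in the forward sub-arc past $E_{j,T}$ but before $E_{j,U}$. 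The outbound and return trips then traverse $E_{j,T}$ in opposite orientations, so for $i\in I$ the loop reads in $\Gamma/T$ as a conjugate $x_j^{\pm1}x_ix_j^{\mp1}$, while for $i\ne j$ with $i\notin I$ it reads simply as $x_i$. A direct check in the case $i=j$ shows that the full round-trip around $C_{j,\rho}$ through $d_T$ reads as $x_j$ with no extra $x_j^{\pm1}$ factors, so $x_j$ is fixed.

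Assembling the case analysis, $d_T\circ u_U$ fixes $x_i$ for $i\notin I$ (including $i=j$) and conjugates $x_i$ by $x_j^{\pm1}$ for $i\in I$, so it agrees with $\alpha_{I,j}^{\pm1}$; applying $\chi$ yields $\sum_{i\in I}a_{i,j}$. The main obstacle is not conceptual but the careful orientation bookkeeping: identifying the direction in which the $U$-path through $C_{j,\rho}$ traverses $E_{j,T}$ given the hypothesis that $E_{j,T}$ precedes $E_{j,U}$, and aligning this with the orientation on the $j$th petal of $\Gamma/T$ induced by $E_{j,T}$, so that the final sign matches the statement.
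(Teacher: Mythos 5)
Your argument is correct and is essentially the paper's proof in expanded form: the paper first collapses $T\cap U$ to reduce to a two-vertex graph and then simply asserts that blowing up $\Gamma/U$ and blowing down $T$ is the Whitehead move realizing the symmetric Whitehead automorphism $\alpha_{I,j}$, which is precisely the loop-tracing computation of $d_T\circ u_U$ that you carry out (your identification of $U\setminus T=\{E_{j,T}\}$ and of the crossings of $E_{j,T}$ with the set $I$ is the right combinatorial core). The one step you defer --- deciding whether the composite is $\alpha_{I,j}$ or $\alpha_{I,j}^{-1}$, which changes the answer by a sign --- is genuinely the only delicate point; the paper elides it in exactly the same way, and to close it you would need to fix the orientation convention identifying the surviving edge $E_{j,T}$ with the $j$th petal of $\Gamma/T$ and check the result against the convention $(x_i)\alpha_{i,j}=x_j^{-1}x_ix_j$, since $\chi(\alpha_{I,j}^{-1})=-\sum_{i\in I}a_{i,j}$.
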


\begin{proof}
 By collapsing the subforest $T\cap U$ we can assume without loss of generality that $T=E_{j,T}$ and $U=E_{j,U}$ are each a single edge, so $\Gamma$ has two vertices, the basepoint $p$ and another vertex $q$. The set $I$ indexes those cladodes whose base is $q$, so $[n]\setminus I$ indexes those cladodes whose base is $p$. Up to the action of $\pgroup_n$ we can assume that $\Gamma/U$ is the trivially marked rose, so we need to show that $h_\chi([\Gamma/T,\overline{p}_T,\overline{\rho}_T]) = \sum_{i\in I}a_{i,j}$. In fact the procedure of blowing up the trivial rose to get $\Gamma$ and then blowing down $T$ is a Whitehead move (see \cite[Section~3.1]{culler86}) that corresponds to the symmetric Whitehead automorphism $\alpha_{I,j}$. In other words, viewed as an element of $\pgroup_n$, we have $\overline{\rho}_T = \alpha_{I,j}$. This means that $h_\chi([\Gamma/T,\overline{p}_T,\overline{\rho}_T]) = \chi(\alpha_{I,j}) = \sum_{i\in I}a_{i,j}$, as desired.
\end{proof}

\medskip

\textbf{Ascending links:} In the next section we will need to understand ascending links $\alk v$ with respect to $(h_\chi,f)$, for $v=[\Gamma,p,\rho]$ a $0$-simplex in $\cpx_n$, so we discuss this a bit here. Since $\alk v$ is a full subcomplex of $\lk v$ we just need to understand which $0$-simplices of $\lk v$ lie in $\alk v$. First note that $\lk v$ is a join, of the \emph{down-link} $\dlk v$, spanned by those $0$-simplices of $\lk v$ obtained from forest blow-downs of $\Gamma$, and its \emph{up-link} $\ulk v$, spanned by those $0$-simplices of $\lk v$ corresponding to forest blow-ups of $\Gamma$. The ascending link $\alk v$ similarly decomposes as the join of the \emph{ascending down-link} $\adlk v$ and \emph{ascending up-link} $\aulk v$, which are just defined to be $\adlk v \defeq \dlk v \cap \alk v$ and $\aulk v \defeq \ulk v \cap \alk v$.

Since $0$-simplices in $\dlk v$ have larger $f$ value than $v$ (i.e., the graphs have fewer vertices than $\Gamma$) and cannot have strictly larger $h_\chi$ value, given a subforest $F\subseteq \Gamma$ we see that $[\Gamma/F,\overline{p}_F,\overline{\rho}_F]\in \adlk v$ if and only if $h_\chi([\Gamma/F,\overline{p}_F,\overline{\rho}_F])\ge h_\chi([\Gamma,p,\rho])$ if and only if $h_\chi([\Gamma/F,\overline{p}_F,\overline{\rho}_F])= h_\chi([\Gamma,p,\rho])$. Similarly if $[\widetilde{\Gamma},\widetilde{p},\widetilde{\rho}]\in \ulk v$ then it lies in $\aulk v$ if and only if it has strictly larger $h_\chi$ value than $v$ (since it has smaller $f$ value).

%----------------------------------------------------------------

\section{Topology of ascending links}\label{sec:asc_link}

Throughout this section, $\chi$ is a non-trivial character of $\pgroup_n$ with character height function $h_\chi$ on $\cpx_n$, and $\alk [\Gamma,p,\rho]$ means the ascending link of the $0$-simplex $[\Gamma,p,\rho]$ with respect to $\chi$.

We will analyze the topology of $\alk [\Gamma,p,\rho] = \adlk [\Gamma,p,\rho] * \aulk [\Gamma,p,\rho]$ by inspecting $\adlk [\Gamma,p,\rho]$ and $\aulk [\Gamma,p,\rho]$ individually. First we focus on $\adlk [\Gamma,p,\rho]$.

\subsection{Ascending down-link}\label{sec:asc_down_link}

The first goal is to realize $\adlk [\Gamma,p,\rho]$ as a nice combinatorial object, the \emph{complex of ascending forests}.

\begin{definition}[Complex of forests]
 The \emph{complex of forests} $\forests(\Gamma)$ for a graph $\Gamma$ is the geometric realization of the partially ordered set of non-empty subforests of $\Gamma$, with partial order given by inclusion.
\end{definition}

We will not really need to know the homotopy type of $\forests(\Gamma)$ in what follows, but it is easy to compute so we record it here for good measure.

\begin{observation}\label{obs:forest_cpx}
 For $\Gamma$ a cactus graph with $V$ vertices, $\forests(\Gamma)\simeq S^{V-2}$.
\end{observation}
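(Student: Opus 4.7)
My plan is to realize $\forests(\Gamma)$ as a barycentric subdivision of a join of simplex boundaries, and then invoke the standard join-of-spheres formula.

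First I would observe that because every simple cycle in the cactus graph lies inside a single cladode, and because the edge set decomposes as the disjoint union $E(\Gamma)=\bigsqcup_{i=1}^{n} E(C_i)$, a subset of edges is a subforest if and only if it is a proper subset of each $E(C_i)$. Writing $m_i = |E(C_i)|$, this identifies the poset of all subforests (including the empty one) with the product $\prod_{i=1}^{n}\tilde P_i$, where $\tilde P_i$ is the poset of proper subsets of $E(C_i)$ under inclusion, with minimum $\emptyset$. Hence $\forests(\Gamma)$ is the order complex of this product with its global minimum $(\emptyset,\dots,\emptyset)$ removed.

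Next I would recognize $\tilde P_i$ as the full face poset of $\partial\Delta^{m_i-1}$, with the empty face serving as minimum. Because the faces of a join $X_1*\cdots*X_n$ of simplicial complexes are exactly the tuples $(\sigma_1,\dots,\sigma_n)$ of (possibly empty) faces $\sigma_i$ of $X_i$, the product poset $\prod_i \tilde P_i$ is precisely the face poset of $\partial\Delta^{m_1-1}*\cdots*\partial\Delta^{m_n-1}$ augmented with the empty face. Deleting that empty face and taking the order complex gives the barycentric subdivision
\[
\forests(\Gamma) \;\cong\; \operatorname{sd}\bigl(\partial\Delta^{m_1-1}*\cdots*\partial\Delta^{m_n-1}\bigr).
\]

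Finally I would plug in the standard facts $\partial\Delta^{m_i-1}\cong S^{m_i-2}$ (with convention $S^{-1}=\emptyset$) and $S^{a}*S^{b}\cong S^{a+b+1}$ to obtain
\[
\forests(\Gamma)\;\simeq\;S^{m_1-2}*\cdots*S^{m_n-2}\;\simeq\;S^{\sum_i (m_i-2)+(n-1)}\;=\;S^{E-n-1},
\]
where $E=\sum_i m_i$; the Euler-characteristic identity $V=E-n+1$ for a connected rank-$n$ graph then yields $\forests(\Gamma)\simeq S^{V-2}$. I expect the only delicate point is bookkeeping through the join in the degenerate case where some cladodes are loops (so $m_i=1$ and the corresponding factor is $S^{-1}=\emptyset$), but the convention $\emptyset*X=X$ makes the sphere dimension count still collapse correctly to $V-2$.
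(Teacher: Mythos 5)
Your argument is correct and is essentially the paper's proof: both decompose $\forests(\Gamma)$ as a join over the cladodes of sphere-like factors (the paper's $\forests_C(\Gamma)\simeq S^{V_C-2}$ is your $\partial\Delta^{m_i-1}$, since a cycle has as many vertices as edges) and then apply the join-of-spheres formula. The only cosmetic difference is the final dimension count, which you do via the Euler characteristic $V=E-n+1$ while the paper counts non-base vertices directly; your extra care with the degenerate loop cladodes and the face-poset/barycentric-subdivision identification is fine.
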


\begin{proof}
 For each cladode $C$ let $\forests_C(\Gamma)$ be the complex of subforests of $\Gamma$ contained in $C$. Clearly $\forests_C(\Gamma) \simeq S^{V_C-2}$, where $V_C$ is the number of vertices of $C$. Since $\forests(\Gamma)$ is the join of all the $\forests_C(\Gamma)$, we get $\forests_C(\Gamma)\simeq S^{d-1}$ for $d=\sum_C (V_C-1)$. Now, $V_C-1$ is the number of non-base vertices of $C$, and every vertex of $\Gamma$ except for the basepoint is a non-base vertex of a unique cladode, so $\sum_C (V_C-1) = V-1$.
\end{proof}

\begin{definition}[Complex of ascending forests]
 The \emph{complex of ascending forests} $\aforests(\Gamma,p,\rho)$ for a symmetric marked cactus graph $[\Gamma,p,\rho]$ is the full subcomplex of $\forests(\Gamma)$ supported on those $0$-simplices $F$ such that $[\Gamma/F,\overline{p}_F,\overline{\rho}_F]\in \alk [\Gamma,p,\rho]$.
\end{definition}

\begin{observation}
 $\aforests(\Gamma,p,\rho) \cong \adlk [\Gamma,p,\rho]$.
\end{observation}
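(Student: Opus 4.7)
The plan is to unwind the definitions and exhibit a natural simplicial isomorphism. First I would check what the simplices of $\adlk[\Gamma,p,\rho]$ look like. A $k$-simplex of $\cpx_n$ strictly above $[\Gamma,p,\rho]$ in the poset corresponds to a strictly ascending chain $[\Gamma,p,\rho]<[\Gamma/F_0,\overline{p}_{F_0},\overline{\rho}_{F_0}]<\cdots<[\Gamma/F_k,\overline{p}_{F_k},\overline{\rho}_{F_k}]$, and by the definition of the partial order such a chain is encoded by a strictly increasing chain $F_0\subsetneq F_1\subsetneq\cdots\subsetneq F_k$ of non-empty subforests of $\Gamma$. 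This gives a bijection between $k$-simplices of the down-link $\dlk[\Gamma,p,\rho]$ and $k$-chains in the poset of non-empty subforests, i.e.\ a simplicial isomorphism $\Phi\colon\forests(\Gamma)\to\dlk[\Gamma,p,\rho]$ sending a $0$-simplex $F$ to $[\Gamma/F,\overline{p}_F,\overline{\rho}_F]$.

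Next I would verify that $\Phi$ is well defined on $0$-simplices in both directions: two distinct non-empty subforests $F\ne F'$ yield inequivalent triples, since the underlying labeled cactus graph $\Gamma/F$ together with its induced marking is recovered from the forest collapse (the preimage of the identified vertices records $F$). Conversely any $0$-simplex of $\dlk[\Gamma,p,\rho]$ comes from a forest collapse of $\Gamma$, unique up to equivalence, so it is of the form $[\Gamma/F,\overline{p}_F,\overline{\rho}_F]$ for a unique non-empty subforest $F$. That establishes $\dlk[\Gamma,p,\rho]\cong\forests(\Gamma)$ as simplicial complexes.

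Finally I would restrict $\Phi$ to the relevant full subcomplexes. By definition, $\aforests(\Gamma,p,\rho)$ is the full subcomplex of $\forests(\Gamma)$ on those vertices $F$ with $[\Gamma/F,\overline{p}_F,\overline{\rho}_F]\in\alk[\Gamma,p,\rho]$, while $\adlk[\Gamma,p,\rho]=\dlk[\Gamma,p,\rho]\cap\alk[\Gamma,p,\rho]$ is likewise full in $\dlk[\Gamma,p,\rho]$ (because $\alk$ is a full subcomplex of $\lk$). Since $\Phi$ matches the $0$-simplices of one with the $0$-simplices of the other, it restricts to a simplicial isomorphism $\aforests(\Gamma,p,\rho)\cong\adlk[\Gamma,p,\rho]$, as claimed.

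There is no real obstacle here; the only thing one has to be mildly careful about is that equivalence of marked cactus graphs is strong enough that distinct subforests of the \emph{same} $\Gamma$ give genuinely distinct $0$-simplices, which is why the order complex identification goes through on the nose rather than up to some quotient.
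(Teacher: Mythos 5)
Your proposal is correct and is exactly the paper's argument: the paper's proof consists of the single line that the isomorphism is $F\mapsto[\Gamma/F,\overline{p}_F,\overline{\rho}_F]$, and you have simply spelled out the routine verifications (well-definedness in both directions, compatibility with the partial order, and fullness of the ascending subcomplexes) that the paper leaves implicit.
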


\begin{proof}
 The isomorphism is given by $F\mapsto [\Gamma/F,\overline{p}_F,\overline{\rho}_F]$.
\end{proof}

\medskip

For certain characters, $\aforests(\Gamma,p,\rho)$ is guaranteed to be contractible. We call these characters \emph{decisive}:

\begin{definition}[Decisive]
 Call a character $\chi$ of $\pgroup_n$ \emph{decisive} if every $[\Gamma,p,\rho]$ lies in a unique star of a symmetric marked rose with maximal $\chi$ value.
\end{definition}

\begin{observation}
 Let $\chi$ be decisive. Then for any $\Gamma\ne R_n$, $\aforests(\Gamma,p,\rho)$ is contractible.
\end{observation}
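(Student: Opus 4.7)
The plan is to identify $\aforests(\Gamma,p,\rho)$ as the poset of non-empty subsets of the edge set of a distinguished spanning tree of $\Gamma$, which is a cone and hence contractible.

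First I would unwind what ``star of a symmetric marked rose'' means in the poset of symmetric marked cactus graphs: the $0$-simplex $[\Gamma,p,\rho]$ lies in $\st[R_n,*,\alpha]$ precisely when there is a spanning tree $T$ of $\Gamma$ with $[\Gamma/T,\overline{p}_T,\overline{\rho}_T] = [R_n,*,\alpha]$. Writing $\alpha_T$ for the resulting element of $\pgroup_n$, this gives
$$h_\chi([\Gamma,p,\rho]) = \max\{\chi(\alpha_T) \mid T \text{ a spanning tree of } \Gamma\}\text{.}$$
By the assumption that $\chi$ is decisive, this maximum is attained at a \emph{unique} spanning tree, call it $T_{\max}$.

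Next I would show $\aforests(\Gamma,p,\rho)$ is exactly the set of non-empty subforests of $T_{\max}$. Given a non-empty subforest $F$ of $\Gamma$, the spanning trees of $\Gamma/F$ correspond to spanning trees $T$ of $\Gamma$ containing $F$, so
$$h_\chi([\Gamma/F,\overline{p}_F,\overline{\rho}_F]) = \max\{\chi(\alpha_T) \mid T \supseteq F\} \le h_\chi([\Gamma,p,\rho])\text{,}$$
with equality iff $T_{\max}\supseteq F$. Since membership in $\adlk[\Gamma,p,\rho]$ (equivalently, in $\aforests(\Gamma,p,\rho)$) amounts to equality here (as noted at the end of Section~\ref{sec:cpx}), we get $F \in \aforests(\Gamma,p,\rho)$ iff $\emptyset \ne F \subseteq T_{\max}$. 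Because $T_{\max}$ is a tree, every subset of its edge set is automatically a subforest, so $\aforests(\Gamma,p,\rho)$ is naturally identified with the poset of non-empty subsets of $E(T_{\max})$ ordered by inclusion.

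Finally, since $\Gamma \ne R_n$ has more than one vertex, $T_{\max}$ has at least one edge, so $T_{\max}$ itself is a maximum element of $\aforests(\Gamma,p,\rho)$. The geometric realization of a poset with a maximum is a cone, and therefore contractible. The only real content is the step using decisiveness to pin down $T_{\max}$ uniquely; I do not anticipate an obstacle here since the argument is essentially bookkeeping once the decisiveness hypothesis is invoked.
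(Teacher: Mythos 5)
Your proposal is correct and follows essentially the same route as the paper: decisiveness pins down a unique spanning tree $T_{\max}$ realizing $h_\chi([\Gamma,p,\rho])$, the ascending forests are exactly the non-empty subforests of $T_{\max}$, and that poset is a cone on $T_{\max}$. The paper phrases this more tersely as ``every ascending forest is contained in an ascending spanning tree, so $\aforests(\Gamma,p,\rho)$ is the star of the unique ascending spanning tree,'' but the content is identical.
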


\begin{proof}
 Every ascending forest is contained in an ascending spanning tree, and we are assuming there is a unique ascending spanning tree, so $\aforests(\Gamma,p,\rho)$ is just the star in $\aforests(\Gamma,p,\rho)$ of this unique ascending spanning tree.
\end{proof}

\begin{proposition}[Positive implies decisive]\label{prop:positive_decisive}
 Positive characters of $\pgroup_n$ are decisive.
\end{proposition}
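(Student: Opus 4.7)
The plan is to exhibit, for each symmetric marked cactus graph $[\Gamma,p,\rho]$, a canonical spanning tree $T^\star$ of $\Gamma$ whose blow-down strictly beats every other spanning tree on $h_\chi$. For each cladode $C_j=C_{j,\rho}$ I take $E_{j,T^\star}$ to be the unique edge of $C_j$ whose initial endpoint is the base $b_{C_j}$. Under the cyclic orientation of $C_j$ this is earlier than every other edge of $C_j$, so for any competing choice $E_{j,U}$ one has $E_{j,T^\star}$ before $E_{j,U}$ in the sense of Definition~\ref{def:cactus}.

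To compare $T^\star$ with a competing spanning tree $U$, I form a chain $U=U_0,U_1,\dots,U_n=T^\star$ in which $U_k$ agrees with $T^\star$ on cladodes $C_1,\dots,C_k$ and with $U$ on the rest. At each step where $E_{k,U}\ne E_{k,T^\star}$, Lemma~\ref{lem:change_tree} gives
\[
h_\chi([\Gamma/U_k,\overline{p}_{U_k},\overline{\rho}_{U_k}])-h_\chi([\Gamma/U_{k-1},\overline{p}_{U_{k-1}},\overline{\rho}_{U_{k-1}}])=\sum_{i\in I_k}a_{i,k},
\]
and since $\chi$ is positive every $a_{i,k}$ is strictly positive, so provided $I_k$ is non-empty the value of $h_\chi$ strictly increases at this step. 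Summing over the (non-empty) set of $k$ at which $U$ differs from $T^\star$ gives $h_\chi$ of the $T^\star$-rose strictly larger than $h_\chi$ of the $U$-rose, so $[R_n,*,\overline{\rho}_{T^\star}]$ is the unique symmetric marked rose of maximal $\chi$-value whose star contains $[\Gamma,p,\rho]$ — which is exactly the decisiveness condition.

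The only substantial point is verifying $I_k\ne\emptyset$ at each non-trivial step. For this I would take $v$ to be the terminal endpoint of $E_{k,T^\star}$ in the cladode $C_k$. Then $v$ lies between $E_{k,T^\star}$ and $E_{k,U}$ in $C_k$; moreover $v\ne b_{C_k}$ (since cladodes are simple cycles) and $v\ne p$ (otherwise $C_k$ would be a single loop at $p$, forcing $E_{k,U}=E_{k,T^\star}$ and making the step trivial). Hence $v$ has degree at least three in $\Gamma$, so some other cladode must be attached at $v$; that cladode lies above $C_k$ and projects onto $v$, placing its index in $I_k$. This is the one step requiring actual combinatorial work; everything else is bookkeeping on top of Lemma~\ref{lem:change_tree}.
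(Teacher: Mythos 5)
Your proof is correct and follows essentially the same route as the paper's: the same canonical spanning tree $T$ (each $E_{j,T}$ the edge emanating from the base of its cladode), the same one-cladode-at-a-time application of Lemma~\ref{lem:change_tree}, and the same appeal to positivity of the $a_{i,j}$. The only difference is that you explicitly justify $I_k\ne\emptyset$, a point the paper absorbs into the statement of Lemma~\ref{lem:change_tree} (``Let $\emptyset\ne I\subsetneq[n]$ be the set\dots''); your verification of it is sound.
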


\begin{proof}
 Let $T$ be the spanning tree in $\Gamma$ such that, using the notation from Lemma~\ref{lem:change_tree}, for each cladode $C_{j,\rho}$, the origin of the edge in $E_{j,T}$ is the base of $C_{j,\rho}$; see Figure~\ref{fig:positive_asc_spanning_tree} for an example. Note that the edge of $E_{j,T}$ is before all the other edges of $C_{j,\rho}$. We claim that $\overline{\rho}_T$ has larger $\chi$ value than $\overline{\rho}_U$ for any other spanning tree $U$. First we prove this in the case when $U$ differs from $T$ only in one cladode, say $C_{j,\rho}$. Let $I_j$ be the set of $i$ such that the projection $\proj_{C_{j,\rho}}(C_{i,\rho})$ lies between $E_{j,U}$ and $E_{j,T}$. Since $E_{j,T}$ is before $E_{j,U}$, by Lemma~\ref{lem:change_tree} we get $h_\chi([\Gamma/T,\overline{p}_T,\overline{\rho}_T]) - h_\chi([\Gamma/U,\overline{p}_U,\overline{\rho}_U]) = \sum_{i\in I_j}a_{i,j}>0$. Now suppose $U$ differs from $T$ in more than one cladode, say $C_{j_1,\rho},\dots,C_{j_r,\rho}$. By changing $E_{j_k,T}$ to $E_{j_k,U}$ one $k$ at a time, we get $h_\chi([\Gamma/T,\overline{p}_T,\overline{\rho}_T]) - h_\chi([\Gamma/U,\overline{p}_U,\overline{\rho}_U]) = \sum_{k=1}^r\sum_{i\in I_{j_k}}a_{i,j_k}>0$. We conclude that $h_\chi([\Gamma/T,\overline{p}_T,\overline{\rho}_T]) > h_\chi([\Gamma/U,\overline{p}_U,\overline{\rho}_U])$, as desired.
\end{proof}

By a parallel argument, negative characters are also decisive.

\begin{figure}[htb]
 \begin{tikzpicture}[line width=2.5pt]\centering
  \draw[-<-] (0,0) -- (1,1);\draw[-<-] (1,1) -- (0.5,3);\draw[-<-] (0.5,3) -- (-1,2);\draw[line width=0.8pt,-<-] (-1,2) -- (0,0);
	\draw[-<-] (-1,2) -- (-3,3);\draw[-<-] (-3,3) -- (-2,1.5);\draw[line width=0.8pt,-<-] (-2,1.5) -- (-1,2);
	\draw[-<-] (0.5,3) to [out=90, in=-45] (-0.5,5);\draw[line width=0.8pt,-<-] (-0.5,5) to [out=-90, in=135] (0.5,3);
	\draw[-<-] (0.5,3) -- (2,3);\draw[-<-] (2,3) -- (1.5,4);\draw[line width=0.8pt,-<-] (1.5,4) -- (0.5,3);
	\draw[line width=0.8pt] (1,1) to [out=-30, in=-90] (2,1);\draw[line width=0.8pt,<-] (2,1) to [out=90, in=30] (1,1);
	\draw[line width=0.8pt] (2,3) to [out=-30, in=-90] (3,3);\draw[line width=0.8pt,<-] (3,3) to [out=90, in=30] (2,3);
	\draw[line width=0.8pt] (1.5,4) to [out=60, in=0] (1.5,5);\draw[line width=0.8pt,<-] (1.5,5) to [out=180, in=120] (1.5,4);
	\draw[line width=0.8pt] (-3,3) to [out=60, in=0] (-3,4);\draw[line width=0.8pt,<-] (-3,4) to [out=180, in=120] (-3,3);
	\draw[line width=0.8pt] (-3,3) to [out=150, in=90] (-4,3);\draw[line width=0.8pt,<-] (-4,3) to [out=-90, in=-150] (-3,3);
	\draw[line width=0.8pt] (-2,1.5) to [out=195, in=135] (-2-.707,1.5-.707);\draw[line width=0.8pt,<-] (-2-.707,1.5-.707) to [out=-45, in=-105] (-2,1.5);
	\draw[line width=0.8pt] (-0.5,5) to [out=15, in=-45] (-0.5+.707,5+.707);\draw[line width=0.8pt,<-] (-0.5+.707,5+.707) to [out=135, in=75] (-0.5,5);
	\draw[line width=0.8pt] (-0.5,5) to [out=150, in=90] (-1.5,5);\draw[line width=0.8pt,<-] (-1.5,5) to [out=-90, in=-150] (-0.5,5);
	\filldraw (0,0) circle (2.5pt);
 \end{tikzpicture}
 \caption{A cactus graph, with the tree $T$ from the proof of Proposition~\ref{prop:positive_decisive} marked in bold.}\label{fig:positive_asc_spanning_tree}
\end{figure}
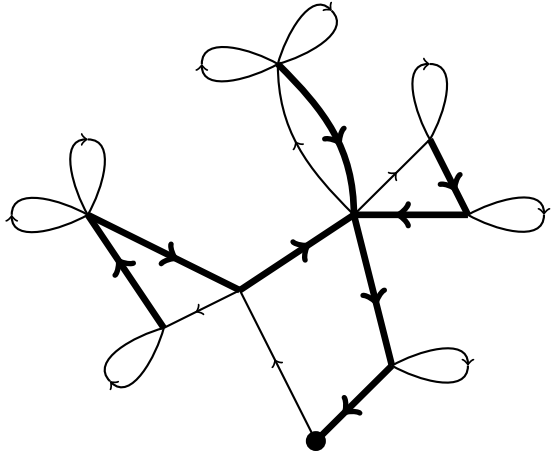

It turns out ``most'' characters are decisive, in the following sense:

\begin{definition}[Generic]\label{def:generic}
 Call a character $\chi=\sum_{i\ne j} a_{i,j}\chi_{i,j}$ of $\pgroup_n$ \emph{generic} if for every choice of $\varepsilon_{i,j}\in\{-1,0,1\}$ we have $\sum_{i\ne j} \varepsilon_{i,j} a_{i,j} = 0$ only if $\varepsilon_{i,j}=0$ for all $i,j$ (said another way, the $a_{i,j}$ have no non-trivial linear dependencies using coefficients from $\{-1,0,1\}$).
\end{definition}

\begin{observation}\label{obs:generic_dense}
 The set $\{[\chi]\in S(\pgroup_n) \mid \chi$ is generic$\}$ is dense in $S(\pgroup_n)$.
\end{observation}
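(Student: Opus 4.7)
The plan is to observe that being non-generic is defined by finitely many linear equations, each of which cuts out a nowhere dense subset of the sphere, so the complement is open and dense.

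More precisely, first I would enumerate the potential linear dependencies. A choice of coefficients $(\varepsilon_{i,j})_{i\ne j}\in\{-1,0,1\}^{n(n-1)}$ gives a linear form $L_\varepsilon(\chi)\defeq \sum_{i\ne j}\varepsilon_{i,j}a_{i,j}$ on $\Hom(\pgroup_n,\R)\cong\R^{n(n-1)}$. There are only $3^{n(n-1)}-1$ choices of non-zero tuples $\varepsilon$, so finitely many such linear forms $L_\varepsilon$. A character $\chi$ is non-generic precisely if $L_\varepsilon(\chi)=0$ for some non-zero $\varepsilon$, i.e., if $\chi$ lies in the finite union $N\defeq \bigcup_{\varepsilon\ne 0}\ker(L_\varepsilon)$ of proper linear hyperplanes of $\R^{n(n-1)}$.

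Next I would pass to the sphere. Since being generic is invariant under positive scaling (indeed under any non-zero scaling), it descends to a well-defined condition on classes $[\chi]\in S(\pgroup_n)$. The image of each hyperplane $\ker(L_\varepsilon)$ in $S(\pgroup_n)=S^{n(n-1)-1}$ is a great sub-sphere of codimension one, which is closed and nowhere dense. The set of non-generic classes is therefore contained in the finite union of these great sub-spheres, which is closed with empty interior. Its complement, the set of generic classes, is consequently open and dense in $S(\pgroup_n)$.

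There is really no obstacle here; the only thing to notice is that one genuinely needs the set of dependency coefficients $\{-1,0,1\}$ to be finite (so that only finitely many hyperplanes appear). Had one instead required no linear dependency with arbitrary integer coefficients, one would face a countable union of hyperplanes, which is still meager by the Baire category theorem but no longer closed with dense complement — however finiteness here makes the argument completely elementary and topological.
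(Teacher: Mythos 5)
Your argument is correct and is essentially the same as the paper's: each non-trivial dependency $\sum_{i\ne j}\varepsilon_{i,j}a_{i,j}=0$ cuts out a closed nowhere dense subset of the sphere, and since there are only finitely many tuples $\varepsilon$, the complement (the generic classes) is open and dense.
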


\begin{proof}
 Given a linear dependence $\sum_{i\ne j} \varepsilon_{i,j} a_{i,j}=0$ with $\varepsilon_{i,j}\in\{-1,0,1\}$, the complement of the set of character classes satisfying this dependence is open and dense in $S(\pgroup_n)$. Since there are only finitely many choices for the $\varepsilon_{i,j}$, the set of generic character classes is also (open and) dense in $S(\pgroup_n)$.
\end{proof}

\begin{proposition}[Generic implies decisive]\label{prop:generic_decisive}
 Generic characters of $\pgroup_n$ are decisive.
\end{proposition}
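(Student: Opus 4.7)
The plan is to show that whenever $\chi$ is generic, the maximum of $\chi(\overline{\rho}_T)$ over spanning trees $T$ of $\Gamma$ is uniquely achieved for every non-rose $[\Gamma,p,\rho]$; by the definition of $h_\chi$, this is exactly decisiveness. So it suffices to prove that if $T$ and $U$ are two distinct spanning trees of $\Gamma$, then $\chi(\overline{\rho}_T)\ne\chi(\overline{\rho}_U)$.

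The approach is to apply Lemma~\ref{lem:change_tree} iteratively. Suppose $T$ and $U$ differ in exactly the cladodes $C_{j_1,\rho},\dots,C_{j_r,\rho}$, for distinct indices $j_1,\dots,j_r$. I will build a sequence of spanning trees $T=T_0,T_1,\dots,T_r=U$ in which $T_k$ is obtained from $T_{k-1}$ by swapping the single edge removed from $C_{j_k,\rho}$. Applying Lemma~\ref{lem:change_tree} at step $k$ (after possibly swapping $T_{k-1}$ and $T_k$ to meet the ``before'' hypothesis) yields
\[
\chi(\overline{\rho}_{T_{k-1}})-\chi(\overline{\rho}_{T_k})=\varepsilon_k\sum_{i\in I_{j_k}}a_{i,j_k}
\]
for some $\varepsilon_k\in\{\pm 1\}$ and some $I_{j_k}\subseteq[n]\setminus\{j_k\}$.

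The key combinatorial step, which I expect to be the main obstacle, is verifying that $I_{j_k}$ is always nonempty. Since $E_{j_k,T_{k-1}}$ and $E_{j_k,T_k}$ are distinct edges in the arc $C_{j_k,\rho}\setminus\{b_{C_{j_k,\rho}}\}$, at least one vertex $v$ of $C_{j_k,\rho}$ lies strictly between them, and $v$ is distinct from both $b_{C_{j_k,\rho}}$ and the basepoint $p$ (the latter because $p$, whenever it lies on a cladode, is the base of that cladode). In a cactus graph every vertex has even degree, each incident cladode contributing exactly two; the hypothesis that non-basepoint vertices have degree at least three therefore forces $v$ to be incident to at least two cladodes. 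Since the block-cut tree of a cactus graph is a tree, $v$ can be a non-base vertex of at most one of those cladodes, so any other cladode $C_{l,\rho}$ containing $v$ must have $v=b_{C_{l,\rho}}$, which places $C_{l,\rho}$ above $C_{j_k,\rho}$ with $\proj_{C_{j_k,\rho}}(C_{l,\rho})=v$. This produces an index $l\in I_{j_k}$.

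Telescoping the step-by-step equalities gives
\[
\chi(\overline{\rho}_T)-\chi(\overline{\rho}_U)=\sum_{k=1}^r\varepsilon_k\sum_{i\in I_{j_k}}a_{i,j_k},
\]
a linear combination of the $a_{i,j}$'s with coefficients in $\{-1,0,1\}$, because the $j_k$ are distinct. Since each $I_{j_k}$ is nonempty and each $\varepsilon_k\in\{\pm 1\}$, this combination is nontrivial, and Definition~\ref{def:generic} then forces the sum to be nonzero. Hence $\chi(\overline{\rho}_T)\ne\chi(\overline{\rho}_U)$ and $\chi$ is decisive.
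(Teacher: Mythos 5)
Your proof is correct and follows essentially the same route as the paper: interpolate between the two spanning trees one cladode at a time, apply the change-of-spanning-tree lemma at each step, and observe that the telescoped difference is a nontrivial $\{-1,0,1\}$-combination of the $a_{i,j}$, which genericity forbids from vanishing. Your verification that each $I_{j_k}$ is nonempty is a detail the paper absorbs into the statement of Lemma~\ref{lem:change_tree} (and into the observation that every vertex of a cactus graph is the base of some cladode), but it is a correct and worthwhile check.
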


\begin{proof}
 Let $T$ and $U$ be two different spanning trees in $\Gamma$, so $[\Gamma,p,\rho]$ lies in the stars of the symmetric marked roses $[\Gamma/T,\overline{p}_T,\overline{\rho}_T]$ and $[\Gamma/U,\overline{p}_U,\overline{\rho}_U]$. We claim that for $\chi$ generic, these symmetric marked roses have different $h_\chi$ values, from which the result will follow. Using the notation from Lemma~\ref{lem:change_tree}, suppose the cladodes in which $T$ and $U$ differ are $C_{j_1,\rho},\dots,C_{j_r,\rho}$, so $E_{j_k,T}\ne E_{j_k,U}$ for $1\le k\le r$, but $E_{j,T}=E_{j,U}$ for all $j\not\in \{j_1,\dots,j_r\}$. Let $U=T_0,T_1,\dots,T_r=T$ be spanning trees such that we obtain $T_k$ from $T_{k-1}$ by replacing $E_{j_k,U}$ with $E_{j_k,T}$. For each $1\le k\le r$ let $I_k$ be the set of $i$ such that the projection $\proj_{C_{j_k,\rho}}(C_{i,\rho})$ lies between $E_{j_k,T}$ and $E_{j_k,U}$. By Lemma~\ref{lem:change_tree}, we know that
$$h_\chi([\Gamma/T_k,\overline{p}_{T_k},\overline{\rho}_{T_k}]) - h_\chi([\Gamma/T_{k-1},\overline{p}_{T_{k-1}},\overline{\rho}_{T_{k-1}}]) = \pm\sum_{i\in I_k}a_{i,j_k}$$
for each $1\le k\le r$ (with the plus or minus depending on whether $E_{j_k,T}$ is before or after $E_{j_k,U}$). This implies that $h_\chi([\Gamma/T,\overline{p}_T,\overline{\rho}_T]) - h_\chi([\Gamma/U,\overline{p}_U,\overline{\rho}_U]) = (\pm \sum_{i\in I_1}a_{i,j_1}) + (\pm \sum_{i\in I_2}a_{i,j_2}) + \cdots + (\pm \sum_{i\in I_r}a_{i,j_r})$. Since $\chi$ is generic, this cannot be zero.
\end{proof}

\begin{remark}
 If $\chi$ is not decisive, then $\aforests(\Gamma,p,\rho)$ is still somewhat understandable, it just might not be contractible. A forest is ascending if and only if it lies in an ascending spanning tree, so $\aforests(\Gamma,p,\rho)$ is the union of the stars of the ascending spanning trees. Also, a non-empty intersection of some of these stars is again a (contractible) star, namely the star of the forest that is the intersection of the relevant spanning trees. Hence $\aforests(\Gamma,p,\rho)$ is homotopy equivalent to the nerve of its covering by stars of ascending spanning trees. This is isomorphic to the simplicial complex whose $0$-simplices are the ascending spanning trees, and where $k$ of them span a $(k-1)$-simplex whenever the trees have a non-empty intersection. In theory it should be possible to compute the homotopy type of this complex, but we are not currently interested in the non-decisive characters (since they will be totally intractable when we study the ascending up-link in the next subsection), so we leave further analysis of this for the future.
\end{remark}

Since $\alk v =\adlk v * \aulk v$ and $\adlk [\Gamma,p,\rho] \cong \aforests(\Gamma,p,\rho)$, we get:

\begin{corollary}\label{cor:alk_non_rose_cible}
 If $\chi$ is a decisive character of $\pgroup_n$ and $v=[\Gamma,p,\rho]$ for $\Gamma$ not a rose, then $\alk v$ is contractible. \qed
\end{corollary}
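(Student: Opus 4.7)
The plan is to assemble pieces already in place. By the discussion at the end of Section~\ref{sec:cpx}, we have the join decomposition $\alk v = \adlk v * \aulk v$. The observation following the definition of the complex of ascending forests gives an isomorphism $\adlk v \cong \aforests(\Gamma,p,\rho)$, and the observation following the definition of ``decisive'' tells us that $\aforests(\Gamma,p,\rho)$ is contractible whenever $\chi$ is decisive and $\Gamma$ is not a rose.

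First I would record the general topological fact that a join $X * Y$ is contractible whenever one of the factors is contractible: a contraction of $X$ to a point $x_0$ extends via the join parameter to a deformation retraction of $X * Y$ onto the cone on $Y$ with apex $x_0$, which is contractible. Hence it suffices to prove that $\adlk v$ is contractible; the homotopy type of $\aulk v$ plays no role in the statement.

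Next I would transport along the isomorphism $\adlk v \cong \aforests(\Gamma,p,\rho)$ and directly invoke the observation on decisive characters: since $\chi$ is decisive and $\Gamma \ne R_n$, every vertex of $\aforests(\Gamma,p,\rho)$ is contained in the star of the unique ascending spanning tree of $\Gamma$ in the forest poset, so $\aforests(\Gamma,p,\rho)$ \emph{equals} that star, and the star of a vertex in any poset geometric realization is contractible (it is a cone on the corresponding punctured star). Combining the two points shows $\adlk v$ is contractible and hence $\alk v$ is contractible. There is no real obstacle: all of the substantive content was done in Proposition~\ref{prop:positive_decisive} and in the observations immediately preceding the corollary. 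The one subtlety worth flagging is that the hypothesis $\Gamma \ne R_n$ is essential, because the rose admits no non-empty subforests (its petals are loops, not trees), so $\aforests(R_n,p,\rho)$ is empty rather than contractible.
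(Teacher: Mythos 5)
Your proposal is correct and follows exactly the route the paper intends for this corollary (which is why it carries a \qed with no written proof): the join decomposition $\alk v = \adlk v * \aulk v$, the identification $\adlk v \cong \aforests(\Gamma,p,\rho)$, the observation that decisiveness makes $\aforests(\Gamma,p,\rho)$ the contractible star of the unique ascending spanning tree, and the standard fact that a join with a contractible factor is contractible. Your remark about why $\Gamma \ne R_n$ is needed is also accurate.
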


\subsection{Ascending up-link}\label{sec:asc_up_link}

Thanks to Corollary~\ref{cor:alk_non_rose_cible}, for decisive characters of $\pgroup_n$ the only $0$-simplices of $\cpx_n$ that can have non-contractible ascending links are those of the form $[R_n,*,\alpha]$. These have empty down-link, so the ascending link equals the ascending up-link. It turns out that the ascending up-link $\aulk [R_n,*,\alpha]$ is homotopy equivalent to a particularly nice complex $\aie_n(\chi)$, the complex of \emph{ascending ideal edges}, which we now describe.

Let $E(*)$ be the set of half-edges of $R_n$ incident to $*$. Since we identify $\pi_1(R_n)$ with $F_n$, the petals of $R_n$ are naturally identified with the basis $S=\{x_1,\dots,x_n\}$ of $F_n$. We will write $i$ for the half-edge in the petal $x_i$ with $*$ as its origin, and $\overline{i}$ for the half-edge in $x_i$ with $*$ as its terminus, so $E(*)=\{1,\overline{1},2,\overline{2},\dots,n,\overline{n}\}$.

\begin{definition}[(Symmetric) ideal edges]
 A subset $A$ of $E(*)$ such that $|A|\ge 2$ and $|E(*)\setminus A|\ge 1$ is called an \emph{ideal edge}. We say an ideal edge $A$ \emph{splits} $\{i,\overline{i}\}$ if $\{i,\overline{i}\}\cap A$ and $\{i,\overline{i}\}\setminus A$ are both non-empty. We call $A$ \emph{symmetric} if there exists precisely one $i\in [n]$ such that $A$ splits $\{i,\overline{i}\}$. In this case we call $\{i,\overline{i}\}$ the \emph{split pair} of $A$.
\end{definition}

Intuitively, an ideal edge $A$ describes a way of blowing up a new edge at $*$, with the half-edges in $E(*)\setminus A$ becoming incident to the new basepoint and the half-edges in $A$ becoming incident to the new non-basepoint vertex; a more rigorous discussion can be found for example in \cite{jensen02}. The conditions in the definition ensure that blowing up a symmetric ideal edge results in a cactus graph. See Figure~\ref{fig:symm_ideal_edges} for an example. The asymmetry between the conditions $|A|\ge 2$ and $|E(*)\setminus A|\ge 1$ arises because the basepoint of a cactus graph must have degree at least $2$, whereas other vertices must have degree at least $3$. In fact every vertex of a cactus graph has even degree, so in practice $|A|\ge 2$ is equivalent to $|A|\ge 3$ for symmetric ideal edges.

\begin{figure}[htb]
 \begin{tikzpicture}[line width=0.8pt]\centering

  \begin{scope}
	 \filldraw (0,0) circle (1.5pt) (1,0) circle (1.5pt)   (3,0) circle (1.5pt) (4,0) circle (1.5pt);
   \draw[line width=1.5pt] (2.5,0) ellipse (2 cm and 1.25 cm);
	 \node at (0,0.35) {$1$};\node at (1,0.35) {$\overline{1}$};\node at (3,0.35) {$2$};\node at (4,0.35) {$\overline{2}$};
	 \node at (2.5,-0.5) {$A$};\node at (-0.5,-0.5) {$E(*)\setminus A$};
	 \node at (6,0) {$\longrightarrow$};
	\end{scope}
	
  \begin{scope}[yshift=-4cm]
	 \filldraw (0,0) circle (1.5pt) (1,0) circle (1.5pt)   (3,0) circle (1.5pt) (4,0) circle (1.5pt);
   \draw[line width=1.5pt] (2,0) ellipse (1.5 cm and 1.25 cm);
	 \node at (0,0.35) {$1$};\node at (1,0.35) {$\overline{1}$};\node at (3,0.35) {$2$};\node at (4,0.35) {$\overline{2}$};
	 \node at (2,-0.5) {$A$};\node at (-0.5,-0.5) {$E(*)\setminus A$};
	 \node at (6,0) {$\longrightarrow$};
	\end{scope}
	
	\begin{scope}[xshift=8cm,yshift=-1cm]
	 \draw[->-] (0,0) to [out=120,in=-120] (0,2);\draw[line width=2.5pt] (0,2) to [out=-60,in=60] (0,0);
	 \draw[->-] (0,2) to [out=120,in=180] (0,3);\draw[->-] (0,3) to [out=0,in=60,->-] (0,2);
	 \node at (-0.35,0.25) {$1$};\node at (-0.35,1.75) {$\overline{1}$};\node at (-0.35,2.25) {$2$};\node at (0.35,2.25) {$\overline{2}$};
	 \filldraw (0,0) circle (2.5pt);
	\end{scope}
	
	\begin{scope}[xshift=8cm,yshift=-5cm]
	 \draw[->-] (0,0) to [out=180,in=180] (0,2);\draw[->-] (0,2) to [out=0,in=0] (0,0);\draw[line width=2.25pt] (0,2) -- (0,0);
	 \node at (-0.6,0) {$1$};\node at (-0.6,2) {$\overline{1}$};\node at (0.6,0) {$\overline{2}$};\node at (0.6,2) {$2$};
	 \filldraw (0,0) circle (2.5pt);
	\end{scope}
	
 \end{tikzpicture}
 \caption{The symmetric ideal edge $\{\overline{1},2,\overline{2}\}$ and the non-symmetric ideal edge $\{\overline{1},2\}$, together with the blow-ups they produce. The former yields a cactus graph and the latter does not.}\label{fig:symm_ideal_edges}
\end{figure}
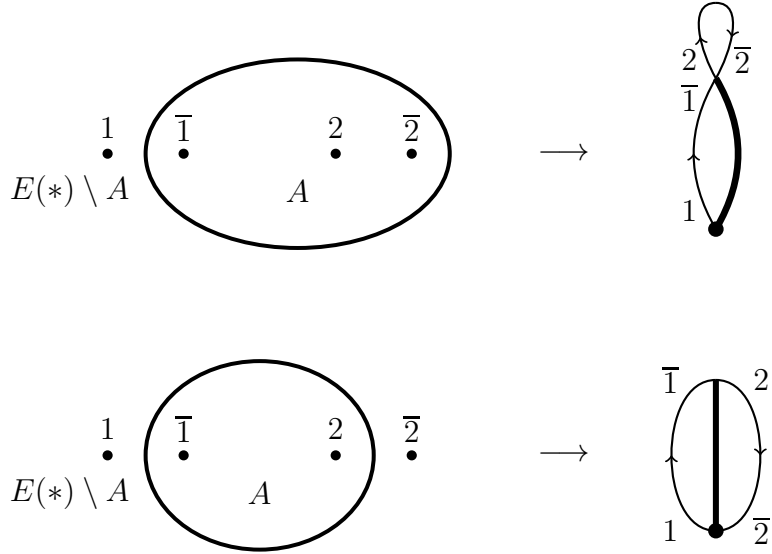

\begin{definition}[Ascending symmetric ideal edge]\label{def:asc_symm_ideal_edge}
 Let $\chi=\sum_{i\ne j} a_{i,j}\chi_{i,j}$ be a character of $\pgroup_n$ and let $A$ be a symmetric ideal edge. Suppose $\{j,\overline{j}\}$ is the split pair of $A$ and let $I=(A\cap [n])\setminus\{j\}$. We call $A$ \emph{ascending} (with respect to $\chi$) if either
 \begin{enumerate}
  \item $j\in A$ and $\sum_{i\in I} a_{i,j} > 0$ or
	\item $\overline{j}\in A$ and $\sum_{i\in I} a_{i,j} < 0$.
 \end{enumerate}
\end{definition}

For example, the symmetric ideal edge in Figure~\ref{fig:symm_ideal_edges} is ascending if and only if $a_{2,1}<0$. If $\chi$ is positive (respectively negative) then $A$ is ascending if and only if $j\in A$ (respectively $\overline{j}\in A$), for $\{j,\overline{j}\}$ the split pair of $A$. If $\chi$ is generic then for any set $I$ of pairs $\{i,\overline{i}\}$ and any $j\in [n]\setminus I$, $\sum_{i\in I} a_{i,j} \ne 0$, so one of $A=\{j\}\cup I$ or $A=\{\overline{j}\}\cup I$ is an ascending ideal edge.

\begin{definition}[Compatible]
 Two ideal edges $A$ and $A'$ are called \emph{compatible} if any of $A\subseteq A'$, $A'\subseteq A$ or $A\cap A' = \emptyset$ occur.
\end{definition}

\begin{definition}[Complex of (ascending) symmetric ideal edges]
 Let $\ie_n$ be the simplicial complex whose $0$-simplices are the symmetric ideal edges, and where a collection of symmetric ideal edges span a simplex if and only if they are pairwise compatible. Let $\aie_n(\chi)$ be the subcomplex of $\ie_n$ spanned by the ascending symmetric ideal edges.
\end{definition}

It is a classical fact that $\ie_n$ is homotopy equivalent to $\ulk [R_n,*,\alpha]$ (for any $\alpha$). More precisely, the barycentric subdivision $\ie_n'$ is isomorphic to $\ulk [R_n,*,\alpha]$. It turns out a similar thing happens when restricting to ascending ideal edges:

\begin{lemma}
 For any character $\chi$ of $\pgroup_n$ and any $0$-simplex of the form $[R_n,*,\alpha]$, $\aulk [R_n,*,\alpha]$ is homotopy equivalent to $\aie_n(\chi)$.
\end{lemma}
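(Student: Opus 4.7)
The plan is to mirror the classical identification (just recalled) of $\ulk[R_n,*,\alpha]$ with the barycentric subdivision $\ie_n'$, then retract the ascending part onto $\aie_n(\chi)'\cong \aie_n(\chi)$. Under that identification a $0$-simplex of $\ulk[R_n,*,\alpha]$ corresponds to a compatible collection $\mathcal{A}$ of symmetric ideal edges, with blow-up $\Gamma_\mathcal{A}$, and a $k$-simplex to a chain $\mathcal{A}_0\subsetneq\cdots\subsetneq\mathcal{A}_k$. I would identify $\aulk[R_n,*,\alpha]$ with the realization of the subposet $\mathcal{P}^+$ consisting of those $\mathcal{A}$ with $h_\chi([\Gamma_\mathcal{A},p,\rho])>\chi(\alpha)$, and write $\mathcal{Q}\subseteq\mathcal{P}^+$ for the subposet whose elements are compatible sets of \emph{ascending} symmetric ideal edges, so that $|\mathcal{Q}|=\aie_n(\chi)'$.

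The key combinatorial step is: $\mathcal{A}\in\mathcal{P}^+$ if and only if some $A\in\mathcal{A}$ is ascending. The cladodes of $\Gamma_\mathcal{A}$ fall into two types, loops (from petals not split by any $A\in\mathcal{A}$) and cycles, one per split-pair equivalence class among the ideal edges of $\mathcal{A}$, formed by the split petal's edge together with the blow-up edges of the associated nested chain. A spanning tree of $\Gamma_\mathcal{A}$ amounts to choosing, in each such cycle, one edge to exclude; the ``standard'' tree excludes every petal edge and collapses to $\alpha$. For any other spanning tree $U$, interpolating one cladode at a time and iterating Lemma~\ref{lem:change_tree} expresses $\chi(\overline{\rho}_U)-\chi(\alpha)$ as a sum of per-cladode contributions, where the contribution at a cladode in which $U$ excludes the blow-up edge $e_A$ (for some specific $A\in\mathcal{A}$) equals $\pm\sum_{i\in (A\cap[n])\setminus\{j\}}a_{i,j}$, precisely the $I$ and $j$ of Definition~\ref{def:asc_symm_ideal_edge}. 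Checking the two cases $j\in A$ and $\overline{j}\in A$ (which determine the sign via the cyclic orientation of the cladode) shows this contribution is positive exactly when $A$ is ascending, so $\Gamma_\mathcal{A}$ is ascending iff some $A\in\mathcal{A}$ is.

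Given the claim, the poset map $r\colon\mathcal{P}^+\to\mathcal{Q}$ sending $\mathcal{A}$ to its subset $\mathcal{A}^+$ of ascending elements is well-defined (nonempty by the claim) and order-preserving, satisfies $r\circ i=\mathrm{id}_\mathcal{Q}$ for the inclusion $i\colon\mathcal{Q}\hookrightarrow\mathcal{P}^+$, and has $i\circ r(\mathcal{A})=\mathcal{A}^+\subseteq \mathcal{A}$, so $i\circ r\le \mathrm{id}_{\mathcal{P}^+}$ as poset maps. The standard fact that comparable poset maps induce homotopic geometric realizations then makes $|i|$ and $|r|$ mutually inverse homotopy equivalences, yielding $\aulk[R_n,*,\alpha]\simeq \aie_n(\chi)$. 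The main obstacle is the cladode-level bookkeeping in the key claim: picking coherent orientations so every cladode is a directed cycle, and handling the sign flip between the $j\in A$ and $\overline{j}\in A$ cases so the conclusion of Lemma~\ref{lem:change_tree} exactly matches the ascending condition; once that is in place everything reduces to iterating that lemma and the standard poset-topology argument.
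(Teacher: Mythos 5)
Your proposal is correct and follows essentially the same route as the paper: identify $\aulk [R_n,*,\alpha]$ with the subcomplex of $\ie_n'$ spanned by ascending compatible collections, prove the key claim that a collection is ascending if and only if it contains an ascending symmetric ideal edge, and then retract onto $\aie_n(\chi)'$ via the Quillen-type poset map that discards the non-ascending members. Your per-cladode computation (excluding $e_A$ instead of the petal edge changes $h_\chi$ by exactly the signed sum appearing in Definition~\ref{def:asc_symm_ideal_edge}) is a slightly more direct packaging of the paper's argument for the key claim, which instead compares the standard spanning tree with an ascending one and isolates a positive term of the telescoping sum from Lemma~\ref{lem:change_tree}.
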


\begin{proof}
 Since the barycentric subdivision $\ie_n'$ of $\ie_n$ is isomorphic to $\ulk [R_n,*,\alpha]$, we have that $\aulk [R_n,*,\alpha]$ is isomorphic to a subcomplex $\ie_n'(asc)$ of $\ie_n'$. This is the subcomplex spanned by those $0$-simplices in $\ie_n'$, i.e, those collections of pairwise compatible symmetric ideal edges, whose corresponding tree blow-up makes $h_\chi$ go up. Note that $\aie_n(\chi)'$ is a subcomplex of $\ie_n'(asc)$, since as soon as one ideal edge in a collection corresponds to an ascending edge blow-up the whole collection corresponds to an ascending tree blow-up.

Given a $0$-simplex $\sigma=\{A_1,\dots,A_k\}$ of $\ie_n'(asc)$, let $\phi(\sigma)\defeq \{A_i\mid A_i\in \aie_n(\chi)\}$. We claim that $\phi(\sigma)$ is non-empty, and hence $\phi \colon \ie_n'(asc) \to \ie_n'(asc)$ is a well defined map whose image is $\aie_n(\chi)$. Let $[\Gamma,p,\rho]$ be the result of blowing up the ideal tree given by $\sigma$. Let $U$ be the spanning tree in $\Gamma$ such that $[\Gamma/U,\overline{p}_U,\overline{\rho}_U] = [R_n,*,\alpha]$. Since the blow-up is ascending, the blow-down reversing it cannot be ascending, so $U$ is not an ascending spanning tree in $\Gamma$. Choose an ascending spanning tree $T$ in $\Gamma$, so $U\ne T$. Similar to the proof of Proposition~\ref{prop:generic_decisive}, we can turn $U$ into $T$ by changing one edge at a time, and from Lemma~\ref{lem:change_tree} we get
$$h_\chi([\Gamma/T,\overline{p}_T,\overline{\rho}_T]) - h_\chi([\Gamma/U,\overline{p}_U,\overline{\rho}_U]) = (\pm \sum_{i\in I_1}a_{i,j_1}) + (\pm \sum_{i\in I_2}a_{i,j_2}) + \cdots + (\pm \sum_{i\in I_r}a_{i,j_r})$$
where the $I_k$ and $j_k$ are as in the proof of Proposition~\ref{prop:generic_decisive}. Since $T$ is ascending but $U$ is not, this quantity is positive. Hence there exists $k$ such that $\pm \sum_{i\in I_k}a_{i,j_k} >0$ (with the ``$\pm$'' determined by whether $E_{j_k,T}$ is before or after $E_{j_k,U}$). Write $j=j_k$ for brevity.

Now let $T'$ be the spanning tree $(T\setminus E_{j,U})\cup E_{j,T}$ (keep in mind that $E_{j,U}$ lies in $T$ and not $U$, and $E_{j,T}$ lies in $U$ and not $T$), so, roughly, $T'$ is the result of changing only the part of $T$ in $C_{j,\rho}$ to look like $U$. Let $F\defeq T\setminus C_{j,\rho}$ and consider $[\Gamma/F,\overline{p}_F,\overline{\rho}_F]$. Let $\overline{E}_{j,U}$ and $\overline{E}_{j,T}$ be the images of $E_{j,U}$ and $E_{j,T}$ in $\Gamma/F$. The difference between the $h_\chi$ values obtained by blowing down $\overline{E}_{j,U}$ versus $\overline{E}_{j,T}$ is the positive value $\pm \sum_{i\in I_k}a_{i,j_k}$ from before; hence $\overline{E}_{j,U}$ is ascending in $\Gamma/F$ and $\overline{E}_{j,T}$ is not ascending. Now, the blow-up of $[R_n,*,\alpha]$ resulting in $[\Gamma/F,\overline{p}_F,\overline{\rho}_F]$ corresponds to one of the $A_i$, and $\overline{E}_{j,T}$ is the new edge blown up. This is an ascending blow-up, since the reverse is a non-ascending blow-down. This shows that at least one of the $A_i$ is indeed ascending, so $\phi(\sigma)\ne\emptyset$.

Having shown that $\phi \colon \ie_n'(asc) \to \ie_n'(asc)$ is well defined, it is easily seen to be a poset retraction (\`a la \cite[Section~1.3]{quillen78}) onto its image $\aie_n(\chi)'$, so we conclude that $\aulk [R_n,*,\alpha] \cong \ie_n'(asc) \simeq \aie_n(\chi)' \cong \aie_n(\chi)$.
\end{proof}

\medskip

It is clear from Definition~\ref{def:asc_symm_ideal_edge} that for $\chi$ positive, the complex $\aie_n(\chi)$ is independent of $\chi$. We will write $\aie_n(pos)$ for $\aie_n(\chi)$ in this case. In Proposition~\ref{prop:pos_alk_conn} we will determine the connectivity properties of $\aie_n(pos)$. First we need the following useful lemma, which was proved in \cite{witzel16}.

\begin{lemma}[Strong Nerve Lemma]\label{lem:nerve}
 Let $Y$ be a simplicial complex covered by subcomplexes $Y_1,\dots,Y_n$. Suppose that whenever an intersection $\bigcap_{i=1}^k Y_{j_i}$ of $k$ of them ($1\le k\le n$) is non-empty, it is $(n-k-2)$-connected. If the nerve $N$ of the covering is $(n-3)$-connected then so is $Y$. If the nerve of the covering is $(n-3)$-connected but not $(n-2)$-acyclic, then so is $Y$.
\end{lemma}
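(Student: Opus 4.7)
The plan is to set up the standard blow-up of the covering together with the Mayer--Vietoris spectral sequence, and to close with a van Kampen upgrade for $\pi_1$. Form the simplicial complex $\widetilde Y$ whose simplices are pairs $(\tau,\sigma)$ with $\tau$ a simplex of $Y$, $\sigma$ a non-empty subset of $\{1,\dots,n\}$, and $\tau\subseteq Y_\sigma\defeq\bigcap_{i\in\sigma}Y_i$. The projection $\pi\colon\widetilde Y\to Y$ has simplex fibers (each fiber is the simplex of $N$ indexing the $Y_i$ that contain a given point), hence is a homotopy equivalence, and the projection $\rho\colon\widetilde Y\to N$ is the comparison map whose properties I want to extract.

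Next I would filter $\widetilde Y$ by the preimages $\rho^{-1}(N^{(p)})$ of the skeleta of $N$, yielding the spectral sequence
\[
E^1_{p,q}=\bigoplus_{\sigma\in N^{(p)},\,Y_\sigma\ne\emptyset}H_q(Y_\sigma)\ \Longrightarrow\ H_{p+q}(Y).
\]
The $d^1$ differential in row $q=0$ is (up to signs) the simplicial boundary of $N$, so $E^2_{p,0}=H_p(N)$. Since a $p$-simplex of $N$ records an intersection of $k=p+1$ cover elements, the hypothesis $\widetilde H_q(Y_\sigma)=0$ for $q\le n-(p+1)-2=n-p-3$ kills the entire region $1\le q\le n-p-3$ of $E^1$. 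On a diagonal $p+q=m$ with $m\le n-3$ the only potentially non-zero entry is therefore $(m,0)$: any $q\ge 1$ would require $q\ge n-p-2$, forcing $m\ge n-2$. The differentials into or out of $E^r_{m,0}$ target positions inside the vanishing region, so $E^\infty_{m,0}=H_m(N)$ and $H_m(Y)\cong H_m(N)$ for $m\le n-3$. The same bookkeeping at $m=n-2$ shows that any differential leaving $E^r_{n-2,0}$ lands at $(n-2-r,r-1)$, where $n-p-3=r-1=q$, so this position vanishes by hypothesis; hence $E^\infty_{n-2,0}=H_{n-2}(N)$ is a subquotient of $H_{n-2}(Y)$, so non-vanishing of $H_{n-2}(N)$ forces non-vanishing of $H_{n-2}(Y)$, giving the ``sharp'' second assertion.

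It remains to upgrade homological $(n-3)$-acyclicity to $(n-3)$-connectivity. For $n=3$ this is just path-connectedness, which is immediate from the cover being connected through the nerve. For $n\ge 4$ I would run an iterated van Kampen argument on the cover of $\widetilde Y$ by preimages of closed stars of vertices of $N$: each $Y_i$ is $(n-3)$-connected hence simply connected, each pairwise intersection $Y_i\cap Y_j$ is $(n-4)$-connected hence path-connected, and $N$ itself is simply connected, so $\pi_1(Y)\cong\pi_1(N)=0$; Hurewicz then converts the established acyclicity into connectivity. I expect the main obstacle to be precisely this $\pi_1$-bookkeeping in the iterated van Kampen step, since the spectral sequence argument is essentially forced by the hypotheses once the vanishing region is correctly computed.
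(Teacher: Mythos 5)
Your proof is correct and is essentially the argument the paper relies on: the paper's ``proof'' simply cites the standard Nerve Lemma (Bj\"orner et al.) for the $(n-3)$-connectivity statement and \cite[Proposition~1.21]{witzel16} for the non-acyclicity statement, the latter being proved there by exactly the Mayer--Vietoris spectral sequence analysis you carry out (your identification of the surviving corner $E^\infty_{n-2,0}=H_{n-2}(N)$ as a quotient of $H_{n-2}(Y)$ is the key point). You have in effect supplied the details that the paper outsources to those two references, so there is nothing to add beyond noting that your $\pi_1$ step is just the fundamental-group case of the cited Nerve Lemma.
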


\begin{proof}
 That $Y$ is $(n-3)$-connected follows from the usual Nerve Lemma, e.g., \cite[Lemma~1.2]{bjoerner94}, but this usual Nerve Lemma is not enough to show $Y$ is not $(n-2)$-acyclic if the nerve is not. In \cite[Proposition~1.21]{witzel16} it was shown using spectral sequences that indeed these hypotheses ensure that $Y$ is not $(n-2)$-acyclic.
\end{proof}

\begin{proposition}\label{prop:pos_alk_conn}
 The complex $\aie_n(pos)$ is $(n-3)$-connected but not $(n-2)$-acyclic (and hence so are $\aulk [R_n,*,\alpha]$ and $\alk [R_n,*,\alpha]$ for any positive character of $\pgroup_n$).
\end{proposition}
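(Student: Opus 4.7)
The plan is to apply the Strong Nerve Lemma (Lemma~\ref{lem:nerve}) to a cover of $\aie_n(pos)$ by $n$ subcomplexes whose nerve is $\partial\Delta^{n-1}\cong S^{n-2}$. Since $\partial\Delta^{n-1}$ is $(n-3)$-connected but not $(n-2)$-acyclic, establishing the hypotheses of the lemma with such a cover will give both conclusions simultaneously.

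For each $i\in[n]$ let $Y_i\subseteq\aie_n(pos)$ be the full subcomplex spanned by ascending symmetric ideal edges $(k,I)$ with $i\notin I$. First I would verify that $\{Y_i\}_{i\in[n]}$ covers $\aie_n(pos)$: given a compatible collection $\sigma$, the maximal ideal edge in $\sigma$ under inclusion has its split index outside its $I$, and every other edge in $\sigma$ is either nested in it or disjoint from it, so $\bigcup_{A\in\sigma}I_A\subsetneq[n]$; any $i$ outside this union places $\sigma\in Y_i$. Then $Y_J:=\bigcap_{i\in J}Y_i$ is the full subcomplex spanned by ideal edges $(k,I)$ with $I\subseteq K:=[n]\setminus J$, and in particular $Y_{[n]}=\emptyset$, so the nerve of the cover is indeed $\partial\Delta^{n-1}$.

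The remaining step is to verify that $Y_J$ is $(n-|J|-2)$-connected for each $\emptyset\ne J\subsetneq[n]$. For $|J|=1$ this is immediate: the maximal ideal edge $A_i:=(i,[n]\setminus\{i\})$ contains every vertex of $Y_i$ (the condition $i\notin I$ forces $\{k\}\cup I\cup\overline{I}\subseteq A_i$), so $Y_i$ is a cone with apex $A_i$ and hence contractible. For general $\emptyset\ne J\subsetneq[n]$ I would argue by induction: inside $Y_J$ sits the split-$K$ subcomplex $N_K\cong\aie_{|K|}(pos)$, together with, for each $j\in J$, a contractible ``split-$j$ star'' with apex $A_j^J:=(j,K)$. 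Each $A_j^J$ is compatible with every vertex of $N_K$ (any $(k,I')$ with $k,I'\subseteq K$ is a subset of $A_j^J$), but the apexes $A_j^J$ are pairwise incompatible. After retracting each split-$j$ star onto its apex, one obtains a subcomplex of $Y_J$ homotopy equivalent to $N_K*D_{|J|}$, the join of $N_K$ with $|J|$ discrete points. By the inductive hypothesis $N_K$ is $(|K|-3)$-connected, so by the join connectivity formula this subcomplex is $(|K|-2)=(n-|J|-2)$-connected. The remaining cross-compatibility edges between non-maximal split-$j$ and split-$K$ ideal edges only add cells to this subcomplex, which does not decrease the connectivity.

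With the hypotheses verified, the Strong Nerve Lemma shows that $\aie_n(pos)$ inherits the connectivity and failure of $(n-2)$-acyclicity from its nerve $\partial\Delta^{n-1}\simeq S^{n-2}$, completing the proof. The main technical obstacle is the inductive step for $Y_J$ when $|J|\ge 2$: the cross compatibilities between non-maximal split-$j$ edges and split-$K$ edges depend on specific nesting/disjointness relations, so one must carefully verify that after retracting the split-$j$ pieces onto their apexes the result really has the claimed apex-cone structure. If this direct analysis proves unwieldy, an alternative is to iterate the Strong Nerve Lemma inside $Y_J$ using the sub-cover $\{Y_{J\cup\{i\}}\}_{i\in K}$, reducing to the case of smaller $|K|$.
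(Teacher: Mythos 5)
Your overall architecture is exactly the paper's: the cover $\{Y_i\}$ is precisely the cover by the stars of the ``hubs'' $\Theta_i=E(*)\setminus\{\overline{i}\}$ used in the paper (your apex $A_i=(i,[n]\setminus\{i\})$ \emph{is} $\Theta_i$), the nerve is $\partial\Delta^{n-1}$, and the whole proof correctly reduces to showing that each $k$-fold intersection $Y_J$ is $(n-|J|-2)$-connected. That reduction step, however, is where your argument has a genuine gap, and it is precisely the step the paper spends most of its effort on. Your main argument rests on the assertion that passing from the subcomplex $N_K*\{A_j^J\}_{j\in J}$ to all of $Y_J$ ``only adds cells, which does not decrease the connectivity.'' That is false as a general principle (attaching a $1$-cell to a point produces a circle), and nothing in the specific combinatorics is offered to replace it: the extra vertices of $Y_J$ (e.g.\ $(j,I)$ with $j\in J$, $I\subsetneq K$) attach along links that must themselves be shown to be highly connected before one can conclude anything. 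Note also that there are compatibility edges between split-$j$ and split-$j'$ vertices for distinct $j,j'\in J$ (two such ideal edges can be disjoint), which your accounting of the ``remaining cross-compatibility edges'' omits.

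The proposed fallback does not repair this. First, $\{Y_{J\cup\{i\}}\}_{i\in K}$ is \emph{not} a cover of $Y_J$: a vertex $(k,I)$ with $I=K$ (in particular each apex $A_j^J$) lies in $Y_J$ but in no $Y_{J\cup\{i\}}$. Second, even if one had a cover of $Y_J$ by $|K|$ subcomplexes with nerve $\partial\Delta^{|K|-1}\simeq S^{|K|-2}$, the Nerve Lemma can deliver at most $(|K|-3)$-connectivity, whereas you need $Y_J$ to be $(|K|-2)$-connected; the iteration loses one degree of connectivity at every level and cannot close the induction. The paper's fix is a different induction: it parametrizes the intersections as complexes $\aie(P;pos)$ for ``positive'' subsets $P\subseteq E(*)$ (your $Y_J$ is $\aie(E(*)\setminus\{\overline{j}:j\in J\};pos)$), inducts on a ``weight'' statistic, and in the case of positive ``defect'' builds $\aie(P;pos)$ up from the contractible star of a single apex by attaching the remaining vertices in a carefully chosen order, decomposing each relative link as a join of a relative in-link and out-link, each identified with a smaller-weight instance of the same family. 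Some argument of this Morse-theoretic or shelling type (or a direct deformation retraction of $Y_J$ onto $N_K*\{A_j^J\}_{j\in J}$, if one exists) is needed to make your inductive step valid.
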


\begin{proof}
 We will prove that $\aie_n(pos)$ is $(n-3)$-connected by using induction to prove a more general statement, and then afterwards we will prove that $\aie_n(pos)$ is not $(n-2)$-acyclic by applying Lemma~\ref{lem:nerve}. Call a subset $P\subseteq E(*)$ \emph{positive} if for each $1\le i\le n$ we have that $\overline{i}\in P$ implies $i\in P$. Define the \emph{defect} $d(P)$ to be the number of $i\in P$ with $\overline{i}\not\in P$. Define the \emph{weight} $w(P)$ of $P$ to be the number of pairs $\{j,\overline{j}\}$ contained in $P$, plus one if the defect is non-zero. For example the sets $\{1,\overline{1},2,\overline{2}\}$, $\{1,2,\overline{2}\}$ and $\{1,2,3,4,5,\overline{5}\}$ all have weight two (and defect zero, one and four, respectively). Also note that $P=E(*)$ itself is positive and has defect zero and weight $n$. Let $\aie(P;pos)$ be the subcomplex of $\aie_n(pos)$ supported on those $0$-simplices $A$ such that $A\subseteq P$. We now claim that $\aie(P;pos)$ is $(w(P)-3)$-connected, so $\aie_n(pos)$ being $(n-3)$-connected is a special case of this.
 
 We induct on $w(P)$. For the base case we can use $w(P)=1$, and the result holds vacuously since every set is $(-2)$-connected. Now let $w(P)\ge 2$. Let $D$ be the set of all $i\in P$ with $\overline{i}\not\in P$, so $d(P)=|D|$. Within this induction on $w(P)$ we now additionally begin an induction on $d(P)$. For the base case we assume $D=\emptyset$, i.e., $d(P)=0$. Consider the $0$-simplices of $\aie(P;pos)$ of the form $P\setminus \{\overline{i}\}$, for each $i\in P\cap [n]$. Call these \emph{hubs}, and denote $P\setminus\{\overline{i}\}$ by $\Theta_i$. Note that a symmetric ideal edge in $\aie(P;pos)$ is compatible with a given hub if and only if it is contained in it (i.e., it cannot properly contain it nor be disjoint from it). Any collection of pairwise compatible symmetric ideal edges in $\aie(P;pos)$ lies in the star of some hub, so $\aie(P;pos)$ is covered by the contractible stars of the $\Theta_i$. The intersection of the stars of any $k$ hubs, say $\Theta_{i_1},\dots,\Theta_{i_k}$, is isomorphic to the complex $\aie(P\setminus\{\overline{i}_1,\dots,\overline{i}_k\};pos)$. For $k=1$ this is contractible, being a star, and for $k>1$ we have $w(P\setminus\{\overline{i}_1,\dots,\overline{i}_k\}) = w(P)-k+1 < w(P)$, so by induction on $w(P)$ we know this is $(w(P)-k-2)$-connected. Finally, the nerve of the covering of $\aie(P;pos)$ by these stars is the boundary of a $(w(P)-1)$-simplex, i.e., a $(w(P)-2)$-sphere, so by the first statement in Lemma~\ref{lem:nerve} we conclude that $\aie(P;pos)$ is $(w(P)-3)$-connected.

Now suppose $D\ne\emptyset$, so $d(P)>0$. Without loss of generality we can write $D=\{1,\dots,d\}$. We will build up to $\aie(P;pos)$ from a subcomplex with a known homotopy type, namely the contractible star of the $0$-simplex $\{1\}\cup (P\setminus D)$. The $0$-simplices of $\aie(P;pos)$ missing from this star are those $A$ containing an element of $\{2,\dots,d\}$ (if $d=1$ there is nothing to do, so assume $d\ge 2$), so to obtain $\aie(P;pos)$ from this star we will attach these missing $0$-simplices, in some order, along their relative links $\lk_{rel} A$. If we can do this in an order such that the relative links are always $(w(P)-4)$-connected, then we can conclude that $\aie(P;pos)$ is $(w(P)-3)$-connected. The order is as follows: first glue in the $A$ containing $2$ in order of decreasing size, then the $A$ containing $3$ in order of decreasing size, and so forth. The relative link $\lk_{rel} A$ of $A$ decomposes into the join of its \emph{relative in-link} $\lk_{rel}^{in} A$ and \emph{relative out-link} $\lk_{rel}^{out}A$. The relative in-link of $A$ is defined to be the subcomplex supported on those $B$ in $\lk_{rel} A$ such that $B\subseteq A$. The relative out-link is defined to be the subcomplex supported on those $B$ in $\lk_{rel} A$ that satisfy either $A\subseteq B$ or $A\cap B = \emptyset$. These options encompass all the ways a symmetric ideal edge can be compatible with $A$, and clearly everything in $\lk_{rel}^{in} A$ is compatible with everything in $\lk_{rel}^{out} A$, so indeed $\lk_{rel} A = \lk_{rel}^{in} A * \lk_{rel}^{out} A$. To show that $\lk_{rel} A$ is $(w(P)-4)$-connected for every $A$ containing an element of $\{2,\dots,d\}$, we will consider $\lk_{rel}^{in} A$ and $\lk_{rel}^{out} A$ separately. Let $\{i_A\}\defeq A\cap D$, so $i_A\in\{2,\dots,d\}$, and let $A_\flat\defeq A\setminus \{i_A\}$. The $0$-simplices $B$ in $\lk_{rel}^{in} A$ must lie in $\aie(A_\flat;pos)$, since for $B$ to come before $A$ in our order while having smaller cardinality than $A$, it must not contain $i_A$ (so such $B$ are actually already in the star of $\{1\}\cup (P\setminus D)$). Hence $\lk_{rel}^{in} A$ is isomorphic to $\aie(A_\flat;pos)$, and $w(A_\flat)=w(A)-1<w(P)$, so by induction on $w(P)$ we know $\lk_{rel}^{in} A$ is $(w(A_\flat)-3)$-connected. Next, the $0$-simplices $B$ in $\lk_{rel}^{out} A$ must be disjoint from $\{i_A + 1,\dots,d\}$ and either properly contain $A$ or be disjoint from $A$. The map $B\mapsto B\setminus A_\flat$ induces an isomorphism from $\lk_{rel}^{out} A$ to $\aie(P\setminus (A_\flat \cup \{i_A + 1,\dots,d\});pos)$; the inverse map sends $C$ to itself if $i_A \not\in C$ and to $C\cup A_\flat$ if $i_A\in C$. Since $w(P\setminus (A_\flat \cup \{i_A + 1,\dots,d\})) = w(P)-w(A_\flat)<w(P)$, by induction on $w(P)$ we know $\lk_{rel}^{out} A$ is $(w(P)-w(A_\flat)-3)$-connected. We conclude that $\lk_{rel} A = \lk_{rel}^{in} A * \lk_{rel}^{out} A$ is $(((w(A_\flat)-3) + (w(P)-w(A_\flat)-3)) +2)$-connected, which is to say $(w(P)-4)$-connected, as desired.

This finishes the inductive proof, which in particular shows $\aie_n(pos)$ is $(n-3)$-connected. Now to see that it is not $(n-2)$-acyclic, consider the covering of $\aie_n(pos)$ by the stars of $\Theta_1,\dots,\Theta_n$, as above. The intersection of any $k$ of these stars is $(n-k-2)$-connected, as was deduced during the inductive proof, and the nerve of the covering is an $(n-2)$-sphere, so Lemma~\ref{lem:nerve} says $\aie(P;pos)$ is not $(n-2)$-acyclic.
\end{proof}

A parallel argument shows that $\aie_n(\chi)$ is also $(n-3)$-connected but not $(n-2)$-acyclic for $\chi$ a negative character of $\pgroup_n$.

\begin{remark}
 If $\chi$ is neither positive nor negative then $\aie_n(\chi)$ is much more complicated, for example as discussed in Remark~\ref{rmk:n>3} below, one can find examples of generic $\chi$ for which $\aie_4(\chi)$ has non-trivial $\pi_1$ and $H_2$. Hence, we have focused only on positive and negative characters of $\pgroup_n$ in Theorem~A, but in Subsection~\ref{sec:n3} we will show that generic characters are also tractable at least when $n=3$.
\end{remark}

\medskip

We can now prove Theorem~A.

\begin{proof}[Proof of Theorem~A]
 By Corollary~\ref{cor:alk_non_rose_cible} and Proposition~\ref{prop:pos_alk_conn}, all the ascending links of $0$-simplices in $\cpx_n$ are $(n-3)$-connected, so Corollary~\ref{cor:morse} says the filtration $(\cpx_n^{\chi \ge t})_{t\in\R}$ is essentially $(n-3)$-connected, and so $[\chi]\in\Sigma^{n-2}(\pgroup_n)$.
 
 To prove the negative statement, note that Proposition~\ref{prop:pos_alk_conn} says that there exist ascending links of $0$-simplices that are not $(n-2)$-acyclic, with arbitrary $h_\chi$ value. Also, every ascending link has trivial $(n-1)$st homology since it is $(n-2)$-dimensional. By Corollary~\ref{cor:morse} then, the filtration $(\cpx_n^{\chi \ge t})_{t\in\R}$ is not essentially $(n-2)$-connected, and so $[\chi]\not\in\Sigma^{n-1}(\pgroup_n)$.
\end{proof}

\begin{remark}
 Using the natural split epimorphisms $\pgroup_n \to \pgroup_m$ for $m<n$, we also now can see that if $\chi=\sum_{i\ne j}a_{i,j}\chi_{i,j}$ is a character of $\pgroup_n$ induced from this epimorphism by a positive or negative character of $\pgroup_m$ (so $a_{i,j}$ is positive for $1\le i,j\le m$ or negative for all $1\le i,j\le m$, and is zero if either $i$ or $j$ is greater than $m$) then $[\chi]\in \Sigma^{m-2}(\pgroup_n)$. However, we cannot immediately tell whether $[\chi]\not\in \Sigma^{m-1}(\pgroup_n)$ (which we suspect is the case), since Pettet showed the kernels of $\pgroup_n \to \pgroup_m$ have bad finiteness properties \cite{pettet10}.
\end{remark}

As an immediate consequence of Theorem~A, Citation~\ref{cit:sig_fin} and Observation~\ref{obs:antipodes}, we have the following result, which will provide the crucial step in proving Theorem~B in Section~\ref{sec:symm_auts}.

\begin{corollary}\label{cor:bb}
 For $n\ge 2$, if $\chi$ is a discrete positive character of $\pgroup_n$, then the kernel $\ker(\chi)$ is of type $\F_{n-2}$ but not $\F_{n-1}$. In particular the ``Bestvina--Brady-esque'' subgroup $BB_n$, i.e., the kernel of the character sending each standard generator $\alpha_{i,j}$ to $1\in\Z$, is of type $\F_{n-2}$ but not $\F_{n-1}$. \qed
\end{corollary}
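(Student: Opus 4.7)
The plan is to deduce this statement as a direct formal consequence of Theorem~A together with the standard Bieri--Renz criterion recorded in Citation~\ref{cit:sig_fin}. Since any character of $\pgroup_n$ vanishes on the commutator subgroup $\pgroup_n'$, the kernel $\ker(\chi)$ automatically sits between $\pgroup_n'$ and $\pgroup_n$, so Citation~\ref{cit:sig_fin} is applicable.

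Concretely, given a discrete positive character $\chi$ of $\pgroup_n$, Theorem~A immediately yields $[\chi]\in\Sigma^{n-2}(\pgroup_n)\setminus\Sigma^{n-1}(\pgroup_n)$. Applying Observation~\ref{obs:antipodes}, the antipodal class $[-\chi]$ lies in the same set (this is really where the ``positive \emph{or} negative'' hypothesis of Theorem~A is used, since $-\chi$ is negative). Now invoke the case of Citation~\ref{cit:sig_fin} for a discrete character $\chi$ with $H=\ker(\chi)$: the subgroup $\ker(\chi)$ is of type $\F_m$ if and only if both $[\chi]$ and $[-\chi]$ belong to $\Sigma^m(\pgroup_n)$. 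With $m=n-2$ both antipodes are present, giving $\F_{n-2}$; with $m=n-1$ the class $[\chi]$ is already absent, obstructing $\F_{n-1}$.

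For the ``in particular'' statement about $BB_n$, I would just check that the defining character $\chi=\sum_{i\ne j}\chi_{i,j}$ satisfies the two hypotheses of the first sentence. It is positive by inspection, all coefficients being equal to $1$, and it is discrete because it sends each generator $\alpha_{i,j}$ to $1\in\Z$ and the $\alpha_{i,j}$ generate $\pgroup_n$, so the image of $\chi$ is exactly $\Z$. Then the first part of the corollary, applied to this $\chi$, gives the finiteness properties of $BB_n=\ker(\chi)$.

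There is no real obstacle in this argument; it is purely a bookkeeping step combining the two ingredients built up earlier in the paper, which is exactly why the corollary is stated as an ``immediate consequence.'' The genuine work is concentrated in Theorem~A (whose proof sits in the analysis of $\aforests(\Gamma,p,\rho)$ and $\aie_n(pos)$) and in setting up the Morse-theoretic framework so that the BNSR-invariants can be read off from ascending-link connectivity.
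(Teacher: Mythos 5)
Your argument is correct and is precisely the paper's intended one: the corollary is stated as an immediate consequence of Theorem~A, the discrete-character case of Citation~\ref{cit:sig_fin}, and Observation~\ref{obs:antipodes}, exactly the three ingredients you combine (with the type $\F_\infty$ hypothesis on $\pgroup_n$ needed for Citation~\ref{cit:sig_fin} already established in Section~\ref{sec:gps}). Nothing to add.
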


\subsection{The $n=3$ case}\label{sec:n3}

When $n=3$ the $0$-simplex links in $\cpx_3$ are graphs, and using some graph theoretic considerations we can actually prove the analog of Proposition~\ref{prop:pos_alk_conn} for generic characters, which leads to the following:

\begin{theorem}\label{thrm:n3}
 $\Sigma^2(\pgroup_3)=\emptyset$.
\end{theorem}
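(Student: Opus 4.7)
The plan is to prove the stronger statement that no generic character class lies in $\Sigma^2(\pgroup_3)$, and then conclude by density and openness. Since $\Sigma^2(\pgroup_3)$ is open in $S(\pgroup_3)$ and its complement will contain the dense set of generic character classes (Observation~\ref{obs:generic_dense}), the complement is forced to be all of $S(\pgroup_3)$. So fix a generic character $\chi$ of $\pgroup_3$. By Proposition~\ref{prop:generic_decisive} $\chi$ is decisive, so by Corollary~\ref{cor:alk_non_rose_cible} the only $0$-simplices of $\cpx_3$ whose ascending link can fail to be contractible are marked roses, and each such ascending link is homotopy equivalent to the graph $\aie_3(\chi)$. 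The goal is then to apply Corollary~\ref{cor:morse} with $m=1$.

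The nontrivial hypotheses to verify are that every ascending link is $0$-connected and that there exist ascending links with arbitrarily low $h_\chi$-value having nontrivial $\widetilde{H}_1$; the remaining hypotheses $\widetilde{H}_2(\cpx_3)=0$ and $\widetilde{H}_2(\alk v)=0$ are automatic since $\cpx_3$ is contractible and the relevant ascending links are at most $1$-dimensional. Both of the nontrivial hypotheses reduce to a study of $\aie_3(\chi)$: genericity makes exactly half of the $18$ symmetric ideal edges ascending, yielding $9$ vertices, and a routine check of the compatibility rules — using the key observation that for each split pair $\{j,\bar j\}$ at least one of the signs $\operatorname{sign}(a_{i,j})$ ($i\ne j$) must agree with $t_j := \operatorname{sign}(\sum_{i\ne j}a_{i,j})$ — yields exactly $12$ edges regardless of the particular generic $\chi$. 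Hence the Euler characteristic of $\aie_3(\chi)$ is $-3$, and once connectivity is established the first Betti number is forced to be $4$, giving the desired nontriviality of $\widetilde{H}_1$.

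So the crux is the connectivity of $\aie_3(\chi)$. Every ascending size-$3$ vertex is automatically joined to the ascending size-$5$ vertex whose split pair is $\{\ell,\bar\ell\}$ for $\ell$ the index not appearing in its label, so it suffices to show the three ascending size-$5$ vertices $f_1^{t_1},f_2^{t_2},f_3^{t_3}$ all lie in one component. For each pair $f_j^{t_j},f_k^{t_k}$, I would exhibit a path of length at most three: either a direct common size-$3$ neighbor exists, or else one routes through the size-$3$--size-$3$ edge on split pair $\{j,\bar j\}$ via a length-$3$ path of the form $f_j^{t_j}-e_{k,j}^{t_j}-e_{i,j}^{-t_j}-f_k^{t_k}$, with $i$ the remaining index. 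The only delicate configurations are those in which neither $f_j$ nor $f_k$ has a common size-$3$ neighbor with the other; these are precisely the cases where the longer path through the $e$--$e$ edge becomes essential, and the observation that $n_j\ge 1$ always ensures such a path exists.

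This connectivity step is the main obstacle; the edge count and the homological conclusion are routine bookkeeping once the combinatorics is set up. Once connectivity is established, Corollary~\ref{cor:morse} delivers $[\chi]\notin\Sigma^2(\pgroup_3)$ for every generic $\chi$, and the density/openness argument above finishes the proof.
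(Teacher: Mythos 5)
Your proposal is correct and follows essentially the same route as the paper: reduce to generic characters via openness and density, use decisiveness to confine the problem to the marked roses, show that the graph $\aie_3(\chi)$ is connected but not a tree, and apply Corollary~\ref{cor:morse}. The only differences are in the graph-theoretic bookkeeping --- you compute exactly $12$ edges and invoke the Euler characteristic where the paper only needs the lower bound of $9$ edges coming from each ascending hub having degree at least $3$, and you join the three hubs by explicit paths of length at most $3$ where the paper uses a counting contradiction --- and both versions check out (in particular your edge count is right: for each split pair the ascending hub contributes $3$ or $4$ depot--hub edges according as the two relevant coefficients have opposite or equal signs, and the depot--depot edge on that split pair exists exactly in the former case, so the total is always $9+3=12$).
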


\begin{proof}
 Since $\Sigma^2(\pgroup_3)$ is open and the generic character classes are dense in the character sphere (Observation~\ref{obs:generic_dense}), it suffices to prove the analog of Proposition~\ref{prop:pos_alk_conn} for all generic $\chi$. Since $\aie_3(\chi)$ is $1$-dimensional, i.e., a graph, we need to prove that it is connected but not a tree.
 
 First we collect some facts about $\ie_3$. It is a graph with eighteen vertices, namely there are twelve vertices corresponding to the symmetric ideal edges $A\subseteq E(*)=\{1,\overline{1},2,\overline{2},3,\overline{3}\}$ with $|A|=3$ (three choices of which $\{i,\overline{i}\}$ to split, times two choices of which of $i$ or $\overline{i}$ to include in $A$, times two choices of which $\{j,\overline{j}\}$ ($j\ne i$) to include in $A$) and six vertices for the symmetric ideal edges with $|A|=5$ (six choices of which element of $E(*)$ to leave out of $A$). Call the former vertices \emph{depots} and the latter \emph{hubs}. There is an edge connecting a depot to a hub whenever the depot is contained in the hub, and there is an edge connecting two depots whenever they are disjoint. Each depot has degree four and each hub has degree six.
 
 Now, since $\chi$ is generic, for each pair of depots of the form $\{i,j,\overline{j}\}$ and $\{\overline{i},j,\overline{j}\}$, precisely one of them is ascending, with a similar statement for hubs. Hence $\aie_3(\chi)$ is a full subgraph of $\ie_3$ spanned by six depots and three hubs. We claim that each ascending hub has degree at least three in $\aie_3(\chi)$. Consider the hub $\{i',j,\overline{j},k,\overline{k}\}$, where $i'\in\{i,\overline{i}\}$ and $i,j,k$ are distinct. If this is ascending then at least one of the depots $\{i',j,\overline{j}\}$ or $\{i',k,\overline{k}\}$ must be as well, since if $a_{j,i}+a_{k,i}$ is positive (respectively negative) then at least one of $a_{j,i}$ or $a_{k,i}$ must be as well. The other two edges incident to our hub come from the fact that one of the depots $\{j,k,\overline{k}\}$ or $\{\overline{j},k,\overline{k}\}$ is ascending, as is one of $\{j,\overline{j},k\}$ or $\{j,\overline{j},\overline{k}\}$.
 
 Having shown that each hub in $\aie_3(\chi)$ has degree at least three in $\aie_3(\chi)$, this tells us $\aie_3(\chi)$ has at least nine edges, since hubs cannot be adjacent. Then since $\aie_3(\chi)$ also has nine vertices we conclude that it contains a non-trivial cycle. It remains to prove it is connected. Say the three hubs in $\aie_3(\chi)$ are $u$, $v$ and $w$, and suppose that $u$ has no adjacent depots in $\aie_3(\chi)$ in common with either $v$ or $w$ (since otherwise we are done). Since there are only six depots, this implies that $v$ and $w$ have the same set of adjacent depots in $\aie_3(\chi)$. But this is impossible since the intersection of the stars of two different ascending hubs can contain at most two ascending depots.
\end{proof}

Combining this with Orlandi-Korner's computation of $\Sigma^1(\pgroup_3)$, we get in particular that $\Sigma^1(\pgroup_3)$ is dense in $S(\pgroup_3)=S^5$ but $\Sigma^2(\pgroup_3)$ is already empty. (Note since $\pgroup_2 \cong F_2$ we also know that $\Sigma^1(\pgroup_2)=\emptyset$.)

\begin{remark}\label{rmk:n>3}
 Unfortunately the analogous result for arbitrary $n$, i.e., that $\Sigma^{n-2}(\pgroup_n)$ is dense in $S(\pgroup_n)$ but $\Sigma^{n-1}(\pgroup_n)$ is empty, cannot be deduced using our methods when $n>3$ (though we do suspect it is true). One would hope that the analog of Proposition~\ref{prop:pos_alk_conn} always holds for all generic $\chi$, but it does not. For example, when $n=4$ one can find a generic character $\chi$ such that $\aie_4(\chi)$ is not simply connected. One example we found is to take $\chi$ with $a_{1,2} = a_{2,1} = a_{3,4} = a_{4,3} = 3$ and all other $a_{i,j} = -1$ (adjusted slightly by some tiny $\varepsilon>0$ to be generic). This non-simply connected ascending link also has non-trivial second homology, so this does not necessarily mean that $[\chi]$ is not in $\Sigma^2(\pgroup_4)$ (and we believe that it actually is), it is just inconclusive. In general we tentatively conjecture that $\Sigma^{n-2}(\pgroup_n)$ is dense in $S(\pgroup_n)$ and $\Sigma^{n-1}(\pgroup_n)$ is empty, but for now our Morse theoretic approach here seems to only be able to handle the positive and negative characters for arbitrary $n$, and also the generic characters for $n=3$.
\end{remark}

%----------------------------------------------------------------

\section{Proof of Theorem~B}\label{sec:symm_auts}

We can now use our results about $\pgroup_n$ to quickly prove Theorem~B, about $\group_n$.

\begin{proof}[Proof of Theorem~B]
 Since $BB_n$ is of type $\F_{n-2}$ but not $\F_{n-1}$ by Corollary~\ref{cor:bb}, and has finite index in $\group_n'$ by Lemma~\ref{lem:bb_decomp}, we know that $\group_n'$ is of type $\F_{n-2}$ but not $\F_{n-1}$. Also, Observation~\ref{obs:antipodes} says that for any $m$ either $\Sigma^m(\group_n)$ is all of $S^0$ or else is empty. The result now follows from Citation~\ref{cit:sig_fin}.
\end{proof}

\bibliographystyle{amsalpha}
\providecommand{\bysame}{\leavevmode\hbox to3em{\hrulefill}\thinspace}
\providecommand{\MR}{\relax\ifhmode\unskip\space\fi MR }
% \MRhref is called by the amsart/book/proc definition of \MR.
\providecommand{\MRhref}[2]{%
  \href{http://www.ams.org/mathscinet-getitem?mr=#1}{#2}
}
\providecommand{\href}[2]{#2}

\end{document}